\def\amsbb{\use@mathgroup \M@U \symAMSb}
\def\BState{\State\hskip-\ALG@thistlm}
\newtheorem{theorem}{Theorem}
\newtheorem{lemma}[theorem]{Lemma}
\newenvironment{proof}{\noindent{\it Proof.}}{\hfill$\square$}
\begin{document}
\begin{frontmatter}

\title{A highly parallel algorithm for computing the action of a matrix exponential on a vector based
on a multilevel Monte Carlo method}

\author[add1,add2]{Juan~A.~Acebr\'on},
\ead{juan.acebron@iscte-iul.pt}
\author[add3]{Jos\'e R. Herrero},
\ead{josepr@ac.upc.edu}
\and
\author[add2]{Jos\'e  Monteiro}
\ead{jcm@inesc-id.pt}

\address[add1]{Dept. Information Science and Technology, ISCTE-University Institute of Lisbon, Portugal} 
\address[add2]{INESC-ID,Instituto Superior T\'ecnico, Universidade de Lisboa, Portugal} 
\address[add3]{Dept. d$'$Arquitectura de Computadors, Universitat Polit\`ecnica de Catalunya, Spain}

\begin{abstract}
A novel algorithm for computing the action of a matrix exponential over a vector is proposed. The algorithm is based
on a multilevel Monte Carlo method, and the vector solution is computed probabilistically 
generating suitable random paths which evolve through the indices of the matrix according to a suitable probability law.
The computational complexity is proved in this paper to be significantly better than the classical
Monte Carlo method, which allows the computation of much more accurate solutions. Furthermore, 
the positive features of the algorithm in terms of parallelism were exploited in practice to develop a highly 
scalable implementation capable
of solving some test problems very efficiently using high performance supercomputers equipped with a large number of cores.
For the specific case of shared memory architectures the performance of the algorithm was compared with the results
obtained using an available Krylov-based algorithm, outperforming the latter in all benchmarks analyzed so far.

\end{abstract}

\begin{keyword}
 Multilevel,  exponential integrators, Monte Carlo method, matrix functions, network analysis, parallel algorithms, high performance computing
\PACS 65C05 \sep 65C20 \sep 65N55 \sep 65M75 \sep 65Y20
\end{keyword}
\end{frontmatter}

\section{Introduction}

In contrast to the numerical methods for solving linear algebra problems, the development of methods for evaluating function of matrices has been in general 
much less explored. This can be explained partially due to the underlying mathematical complexity of evaluating the function, but also under the computational point of view, because
the algorithms developed so far tend to be less efficient and in general more difficult to be parallelized. 
In addition, related to the first issue, an added difficulty appears in estimating the associated error of the numerical method, which is well understood for solving iteratively linear algebra problems, but becomes a rather cumbersome process for functions of matrices. This is even worse in the case of the matrix exponential, due to the lack of a clear and consensually agreed notion of the residual of the iterative method, see for instance \cite{Botchev}.

In particular the second issue represents indeed a serious drawback, since it is preventing in practice to deal with large scale problems appearing in science and engineering. Nowadays there are a plethora of applications described by mathematical models which require evaluating some type of function of matrices in order to be solved numerically. 
For the specific case of the matrix exponential, we can find applications in fields as 
diverse as circuit simulations~\cite{Cheng1}; power grid simulations \cite{Sadiku,Cheng2}; nuclear reaction simulations 
\cite{Pusa}; analysis of transient solutions in Markov chains \cite{Sidjea}; numerical solution of partial differential equations (PDEs) \cite{Mattheij}; and analysis of complex networks~\cite{Benzi}, to cite just a few examples. More specifically, in the field of partial differential equations, numerically solving a boundary-value PDE problem by means of the method of lines requires in practice to compute the action of a matrix exponential using therefore exponential integrators \cite{Martinez}. On the other hand, in network analysis, determining some relevant metrics of the network, such as for instance the total communicability which characterizes the importance of the nodes inside 
the network, entails computing the exponential of the adjacency matrix of the network.

For the specific problem of computing the action of the matrix exponential over a vector several classes of numerical methods have been proposed in the literature in the last decades (see the excellent review in \cite{Higham}, and references therein). Probably the most analyzed and disseminated 
methods are those based on Krylov-based subspace methods, which use in practice a basis of a subspace constructed using the Arnoldi process, and compute the exponential of the projected matrix (typically much smaller) by using standard matrix exponential techniques \cite{Higham2}.

An alternative to the aforementioned deterministic methods does exist, and consists in using probabilistic methods based on Monte Carlo (MC) simulations.
Although much less known than the former methods, the Monte Carlo methods specifically used for solving linear algebra problems have been 
discussed in the literature in various forms along the years. In fact, it was the seminal paper by von Neumann and Ulam during the 40's \cite{Forsythe} that gives rise to an entire 
new field, and from there a multitude of relevant results, and substantial improvements of the original algorithm have appeared in the literature during the last years, see e.g. \cite{dimov2}  and \cite{dimov} for further references. Essentially the main goal is to generate a discrete Markov chain 
whose underlying random paths evolve through the different indices of the matrix. The method can be understood formally as a procedure consisting in a Monte Carlo sampling of the 
Neumann series of the inverse of the matrix. The convergence of the method was rigorously established in \cite{Mascagni}, and improved further more recently (see for instance \cite{Benzi3}, and \cite{dimov4} just to cite a few references).

Generalizing the method for dealing with some functions of matrices, such as the matrix exponential, was only recently accomplished in \cite{Acebron_SISC}. The method is based on generating random paths, which evolve through the indices of the matrix, governed now by a suitable continuous-time Markov chain. The vector solution is computed probabilistically by averaging over a suitable multiplicative functional.

The main advantages of the probabilistic methods, as it was already stated in the literature, are mainly due to its
privileged computational features, such as simplicity to code and parallelize. This in practice allows us to develop parallel codes with extremely low communication overhead among processors, having a positive impact in parallel features such as scalability and fault-tolerance.
Furthermore, there is also another distinguishing aspect of the method, which is the capability of computing the solution of the problem at specific chosen points, without the need for solving globally the entire problem. This remarkable feature has been explored for efficiently solving continuous problems such as boundary-value problems for PDEs in \cite{Acebron1,Acebron3,Acebron4}, offering significant advantages in dealing with some specific applications found in science and engineering.

Yet an important disadvantage of any Monte Carlo method is the slow convergence rate to the solution of the numerical method \cite{Evans}, being
in general of order $\mathcal{O}(N^{-1/2})$, where $N$ denotes the sample size. Nevertheless, there already exist a few statistical techniques, such as variance reduction, multilevel Monte Carlo (MLMC), and 
quasi-random numbers, which have been proposed to mitigate in practice such a poor performance, improving the order of the global error, and consequently the overall performance of the algorithm. 
Among all the aforementioned methods, the multilevel method clearly stands out, and currently it has become in fact the preferred method to speed up the convergence of a variety of stochastic simulations, with a remarkable impact on a wide spectrum of applications. 
An excellent review has been recently published in \cite{Giles1} describing in detail the method as well as a variety of applications where it was successfully applied
(see also \cite{Anderson} for more details specifically related with the topic of this paper).

One of the main contributions of this paper is precisely to develop a multilevel method for the problem of computing the action of a matrix exponential over a vector. This is done by conveniently adapting the probabilistic representation of the solution derived in \cite{Acebron_SISC} to the multilevel framework. In addition, the convergence of the method is analyzed, as well as the computational cost
estimated. The second important contribution was to parallelize the resulting algorithm, and finally run successfully several relevant benchmarks for an extremely large number of processors using high performance supercomputers belonging to the top-performance
supercomputers in the world (according to the well-known {\it TOP500} list \cite{top500}).

The outline of the paper is as follows. Briefly, the mathematical description of the probabilistic method   
is summarized in Section \ref{theory}, and the problem is mathematically formalized according to the multilevel framework. 
In Section \ref{algorithm}, the developed algorithm is described through the corresponding pseudocodes. Section \ref{complexity} is 
devoted to the analysis of both, the algorithm 
complexity, and the numerical errors of the method. Finally in Section \ref{simulations} several benchmarks are run to assess the performance and scalability of the method, and whenever available, a comparison with the performance obtained by the classical Krylov-based method is done. In closing, we highlight the main results and suggest further directions for future research.

\section{Mathematical description of the probabilistic method and multilevel Monte Carlo method}\label{theory}

In order to implement any multilevel Monte Carlo method it is mandatory to have a probabilistic representation of the solution. Thus, we describe next the probabilistic method used so far to compute the action of a matrix exponential over a vector.

\subsection{Probabilistic method}

The probabilistic representation for the action of a matrix exponential over a vector was introduced in \cite{Acebron_SISC} for dealing exclusively with adjacency matrices 
of undirected graphs. However,  in the following we show that this representation can be straightforwardly generalized for dealing with arbitrary matrices.

Consider $A=\{a_{ij}\}_{i,j=1,\ldots,n}$ a general {\it n}-by-{\it n} matrix, $u$ a given {\it n}-dimensional vector, and $x$ an {\it n}-dimensional vector. This vector corresponds to
the vector solution after computing the action of a matrix exponential over the vector $u$, that is $x=e^{\beta A}\,u$. Here the parameter $\beta$ is a constant, typically interpreted as the time variable in partial 
differential equations, or an effective {\it "temperature"} of the network in problems related with complex networks (see \cite{Estrada_review}, e.g.).

Let us define a diagonal matrix $D$, represented hereafter as a vector ${\bf d}$, with entries $d_{ij}=0$ $\forall i\ne j$, $d_{ii}=d_i=a_{ii}+L_{ii}, i=1,\ldots,n$, and a matrix $T$ with entries $t_{ij}$ given by
\begin{equation}
t_{ij}=
    \begin{cases}
      L_{ii}, & \text{if}\ i=j \\
     (-1)^{\sigma_{ij}}L_{ij},  & \text{otherwise}
    \end{cases}
\end{equation}
where $\sigma=\{\sigma_{ij}\}$ is a binary matrix with entries taking the value $1$ when $a_{ij}< 0$, and $0$ otherwise. 
Here $L_{ij}$ denotes the Laplacian matrix, defined in the broad sense as a matrix with nonpositive off-diagonal entries $L_{ij}=-|a_{ij}|$, and zero row sums, that is 
$L_{ii}=-\sum_{j\ne i} L_{ij}$. Then, it holds 
that $A=D-T$. Note that our definition differs from the classical one $A=D-L$ addressed to adjacency matrices \cite{Merris}. Instead, in this paper matrix 
$A$ can be any matrix. This is possible due to 
two changes. First, our diagonal matrix $D$ is not a degree matrix since the diagonal term 
$a_{ii}$ in the original matrix is added to the degree of the row (stored in $L_{ii}$). Second, we replace matrix $L$ with matrix $T$, 
which takes into account that matrix $A$ can have both positive and negative values unlike an adjacency matrix. Thus, this does not constitute
any restriction in the class of matrices amenable to be represented probabilistically. Quite the contrary, one can see that any arbitrary 
matrix can be straightforward decomposed in such a way.

Finding a probabilistic representation for this problem requires in practice \cite{Acebron_SISC} to use a splitting method for approximating the action of the matrix exponential over the vector $u$ as follows,
\begin{equation}
\bar{x}=\left(e^{\Delta t D/2}e^{-\Delta t T} e^{\Delta t D/2}\right)^N\,u, \label{eq_general1}
\end{equation}
where $\Delta t=\beta/N$, which in the following and for convenience it will be termed as the time step. Note that $\bar{x}$ corresponds to an 
approximation of the true solution $x$. In fact, this corresponds to the Strang splitting method, and therefore leads to an 
error, which after one time step is known \cite{Jahnke} to be of order $\mathcal{O}(\Delta t^3)$ locally, and of order $\mathcal{O}(\Delta t^2)$ globally. Therefore, the true solution is recovered in the limit $N\to\infty$. The probabilistic representation for computing a single entry $i$ 
of the vector $\bar{x}$ is then given by
\begin{equation}
\bar{x}_i=e^{\Delta t\,d_i/2}\mathbf{E}[\prod_{k=1}^N \eta_k],
\label{prob_single_entry}
\end{equation}
where $\eta_k= \phi_k\, e^{\Delta t\,d_{i_k}}$, $k=1,\ldots,N-1$, and $\eta_N=\phi_N\, e^{\Delta t\,d_{i_N}/2}\,u_{i_N}$. The $i_k$, $k=1,\ldots,N$, is a sequence of $N$ discrete random variables 
with outcomes on $S=\{1,2,\cdots,n\}$, and $\phi_k$ a two-point random variable taking values $-1$ and $1$ with a probability related to the matrix $\sigma$. 
The probabilities $p_{i_{k-1}\,i_k}(t)$, $k=2,\ldots,N$, and $p_{i\,i_1}(t)$ for $k=1$,  correspond to the transition probabilities of a continuous-time Markov
chain generated by the infinitesimal generator $Q=-L$ and evaluated at time $\Delta t$ for each $k$, being solution of the Kolmogorov's backward equations \cite{Waymire},
\begin{equation}
P'(t)=Q\,P(t),\quad P(0)=\mathbbm{1}\quad \quad (t\geq 0),\label{backward}
\end{equation}
for the matrix transition probability $P=(p_{ij})$.

A neat picture of this probabilistic representation can be described as follows: A random path starting at the chosen entry $i$ is generated according to the continuous-time
Markov chain governed by the generator $Q$, and evolves in time jumping randomly from $i$ to any state on $S$. Along this process, $N$ functions $\eta_k$ are evaluated, and the 
solution is obtained through an expected value of a suitable multiplicative functional. 

Note that such a representation allows in practice to compute a single entry $i$ of the vector solution, but can be conveniently modified to represent the 
full vector solution $\bar{x}$. The probabilistic representation for computing a single entry of the vector solution requires generating suitable random paths 
evolving backward in time from the state $i$ at $t=\Delta t$ to a final state on $S$ for $t=0$. Instead, the probabilistic representation for the full vector
requires generating a random path that starts at a given state according to a specific initial distribution, and evolves forward in time governed by a continuous-time Markov chain generated by the transpose of the generator $Q$. The contribution to the entry $i$ of the vector is mathematically formalized through the following representation:
\begin{equation}
\hat{x}_i=e^{\Delta t\,d_i/2}U\,\mathbf{E}[\prod_{k=1}^N \eta_k ],
\label{prob_full_vector}
\end{equation}
where $\eta_k= \phi_k\, e^{\Delta t\,d_{i_k}}$, $k=1,\ldots,N-1$, and $\eta_N=\phi_N\, e^{\Delta t\,d_{i_N}/2}\, u_{i_N}$, and $U=\sum_{l=1}^n u_{l}$.

In order to adapt this representation to the multilevel Monte Carlo framework, it is convenient to use the typical notation used so far in the literature. This entails rewriting the probabilistic representation for computing a single entry $i$ of the vector solution as 
\begin{equation}
\bar{x}_i=\mathbf{E}[P], \quad P=\prod_{j=1}^{N/2} \eta^{(j)},\label{prob_level}
\end{equation}
where
\begin{eqnarray}
\eta^{(j)} &=&\phi^{(j)}\, e^{\Delta t(d_{i_k}/2+d_{i_{k+1}}+d_{i_{k+2}}/2)},\,\quad j=1,\ldots,N/2-1,\nonumber\\
\eta^{(j)} &=&\phi^{(j)}\,e^{\Delta t(d_{i_k}/2+d_{i_{k+1}}+d_{i_{k+2}}/2)}\,u_k,\, j=N/2.
\end{eqnarray}
Here $k=2j-1$, and $i_1=i$. A similar expression can be readily found for the probabilistic representation of the full vector solution. 

It is worth observing that we have to deal with two sources of error when implementing in practice the probabilistic method, that is the statistical error coming from the use of a finite sample size for estimating the expected value, and the error due to the splitting method. In fact this error can be considered as being the equivalent to the truncation error appearing in discretizing differential equations, and in the following it will be termed as truncation error.

\subsection{The multilevel Monte Carlo method}
The multilevel Monte Carlo method we have developed is essentially based on the well-known method many times described in the literature. In the following we introduce briefly the ideas underlying the method for those readers not familiar with the topic. For further details see the excellent survey in \cite{Giles1}, and references therein. 

Essentially, the goal of the geometric multilevel Monte Carlo method consists in approximating the finest solution $P_L$, obtained to the level
of discretization $L$, using a sequence of coarser approximations obtained at previous levels $l$, from $l_0$ to $L-1$. In our specific problem this corresponds to different levels of discretization according to the value of $\Delta t$, being now $\Delta t_l=\beta/N_l$, with $N_l=2^l$.
The minimum and initial level $l_0$ is chosen typically to be the entire interval, that is $\Delta t_0=\beta$. However this is not theoretically required, 
and for this specific problem we show in Section \ref{complexity} that it is best not to do so. This is because the computational cost tends to be independent of the 
level when simulating for the coarsest level of simulation. Therefore, in the following we assume that the minimum level to be chosen is $l_0$.
The multilevel method can be formalized mathematically through the following telescoping series,
\begin{equation}
\bar{x}^L=\mathbf{E}[P_L]=\mathbf{E}[P_{l_0}]+\sum_{l=l_0+1}^L m_l,\label{series_telescopic}
\end{equation}
where $m_l=\mathbf{E}[P_l-P_{l-1}]$, $ P_l=\prod_{j=1}^{N/2} \eta^{(j)}_l$, and 
\begin{eqnarray}
\eta^{(j)}_l &=&\phi ^{(j)}_l\, e^{\Delta t_l(d_{i_k}/2+d_{i_{k+1}}+d_{i_{k+2}}/2)},\,j=1,\ldots,N/2-1,\nonumber\\
\eta^{(j)}_l &=&\phi ^{(j)}_l\,e^{\Delta t_l(d_{i_k}/2+d_{i_{k+1}}+d_{i_{k+2}}/2)}\,u_k,\, j=N/2,\label{prob_level_l}
\end{eqnarray}
with $k=2j-1$, and $i_1=i$.
Note that this induces a truncation error which is proportional to $\mathbf{E}[P_L-P_{L-1}]$.
Numerically, when a finite sample of sizes $M_l, l=l_0,\ldots,L$ is used, Eq. (\ref{series_telescopic}) 
can be approximated by the following estimator
\begin{equation}
\bar{x}^L\approx\frac{1}{M_0}\sum_{i=1}^{M_0}\,P^{(i)}_{l_0}+\sum_{l=l_0+1}^L\frac{1}{M_l}\sum_{i=1}^{M_l}\,(P^{(i)}_{l}-P^{(i)}_{l-1}).
\label{mlmc_estimate}
\end{equation}
It is worth observing that the samples used for computing the approximation at level $l$ are reused for computing the level $l-1$ adapting them 
conveniently for such a coarse level. In fact, the underlying correlation appearing between the two consecutive levels belonging to the same sample 
becomes essential in order to reduce the overall variance for the same computational cost. However, the final goal of the multilevel method is the opposite, that is, reducing the computational cost by choosing
conveniently an optimal sample size $M_l$, keeping fixed the overall variance within a prescribed accuracy $\varepsilon^2$. After a suitable minimization process, the result as explained in \cite{Giles1} is given by
\begin{equation}
M_l=\frac{1}{\varepsilon^2}\sqrt{\frac{V_l}{C_l}}\sum_{l=l_0}^{L}V_l C_l,
\end{equation}
where $C_l$, and $V_l$ are the computational cost, and the variance for each level $l$, respectively. The overall computational cost and variance can be calculated as follows
\begin{equation}
C_T=\sum_{l=l_0}^{L} C_l,\quad\quad V_T=\sum_{l=l_0}^{L} \frac{V_l}{M_l},\label{vl_ml}
\end{equation}

\section{The multilevel algorithm}\label{algorithm}

To implement in practice the multilevel method for computing the action of the matrix exponential over a vector, it is first necessary to introduce a suitable algorithm capable of generating
efficiently the random paths. Second, we need to describe the strategy followed to compute the difference between any two consecutive levels as appears in Eq. (\ref{mlmc_estimate}). This requires an
efficient technique to reuse the paths obtained when simulating with a higher level $l$ for the lower level at discretization $l-1$. 

Concerning the first issue,  we describe next the numerical method proposed to generate in practice the continuous-time Markov chain.
Let $p_{ij}(t)$ represent the transition probability matrix. Then the Kolmogorov's backward equation in Eq. (\ref{backward}) can be equivalently represented as the 
following system of integral equations \cite{Waymire} 
\begin{equation}
p_{ij}(t)=\delta_{ij}\,e^{-L_{ii} t}+\sum_{j\neq i}\int_0^t ds\, L_{ii}\, e^{-L_{ii}\, s} k_{ij} p_{ij} (t-s),\label{backward2}
\end{equation}
where $k_{ij}=|L_{ij}|/L_{ii}$. Let $S_0,S_1,\ldots$ be a sequence of independent exponential random times picked up from the exponential probability density $p(S_i)=
L_{ii}\,e^{-L_{ii} S_i}$. The integral equations above along with the sequences of random times can be used to simulate a path according to the following recursive algorithm: Generate a first random time $S_0$ obeying the
exponential density function; Then, depending on whether $S_0 < t$ or not, two
different alternatives are taken; If $S_0 > t$, the algorithm is stopped, and no jump from the state $i$ to a different state is taken;
If, on the contrary, $S_0 < t$, then the state $i$ jumps to a different state $j$ according to the probability function $k_{ij}$, and a new second random number exponentially distributed $S_1$
is generated; If $S_1 < (t-S_0)$ the algorithm proceeds repeating the same elementary rules, otherwise it is stopped.

\begin{figure}[!t]
\includegraphics[width=4.5in,angle=-90]{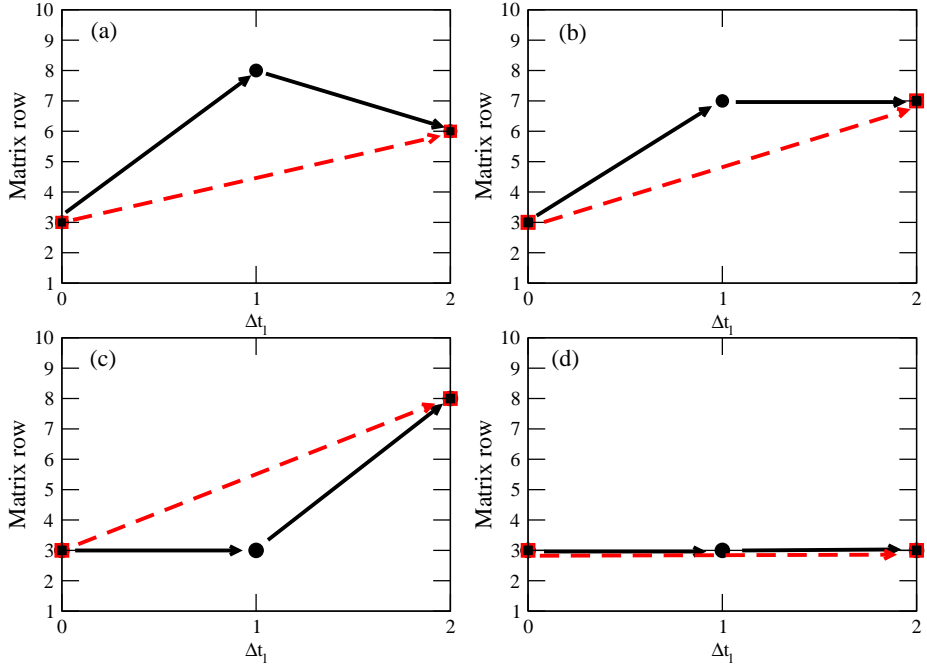}
\caption{Sketch diagram showing the four possible sampled paths obtained for level $l=2$, and for a matrix of size $n=10$. The solid 
line corresponds to a random path obtained for a level number $l$, and the dotted line with $l-1$.}
\label{path}
\end{figure}

In Fig. \ref{path} a sketch diagram for the case of $l=2$ is shown. This illustrates graphically how the second issue, related to the computation of the coarse level $l-1$ using the higher level $l$, has been solved in practice. There we plot the four different scenarios that may occur when generating random paths (assuming we are interested in computing only a single entry $i$ of the 
vector solution, and therefore forcing all random paths to start at the same state $i$). Thus, from Eq. (\ref{prob_level}), the possible outcomes of the two 
random variables may induce two transitions to any of the rows of a given matrix during the two time steps of size $\Delta t_2$. But only the last one should be used for determining the paths corresponding to the previous level $l=1$. More specifically, the set of the four figures describe the following scenarios: a) Transitions occur at the first and the second time step; b) Transition only at the first time step; c) Transition only at the second time step, and d) no transition at all. Note that the last scenario contributes with zero to the term $\mathbf{E}[P_2-P_{1}]$ in (\ref{series_telescopic}).

In Algorithm \ref{pseudocode_main}, we describe a pseudocode corresponding to the implementation of the multilevel method. In fact, this consists in the general setting for 
any implementation of the method for a variety of problems. The distinguishing feature among them is the suitable procedure chosen to compute in practice any of the terms of the expansion in Eq. (\ref{series_telescopic}), as well as the associated variances. The pseudocode of the procedure for computing a single entry of the vector solution is described in Algorithm \ref{pseudocode1}.

Although the multilevel method could be used to compute the full vector solution as well, the implementation is much more involved and the performance of the algorithm 
less efficient. This is because it will require in practice to save vectors instead of scalars for any of the levels in  Eq. (\ref{series_telescopic}). This can be mitigated instead by
computing a scalar function of the full vector solution, and since the complexity of the algorithm for computing the full vector solution by Monte Carlo is similar to that for obtaining the solution of a single entry, in principle the computation time of the multilevel method for the former case should be comparable. In fact, the pseudocode is similar (see Eq. \ref{prob_single_entry} and Eq. \ref{prob_full_vector}).


\begin{algorithm}
\caption{Multilevel Monte Carlo (MLMC) algorithm.}\label{pseudocode_main}
\begin{algorithmic}
\State INPUT: $L=l_0+4$, $M=M_0$, $i, N$, $\varepsilon$, $\beta$
\State Call {\bf MLMCL}($i,\Delta t_l,N,M_0$) for fast estimating $m_l$ and $V_l$ for $l=l_0,\ldots,L$
\While{$error\ge\varepsilon$}
\State Compute the optimal number of samples $M_l$ for $l=l_0,\ldots,L$
\State Call {\bf MLMCL}($i,\Delta t_l,N,M_l$) for further improvement for $l=l_0,\ldots,L$
\If {$error \le \varepsilon$} 
EXIT
\Else
\State Increase number of levels, $L=L+1$
\EndIf
\EndWhile

\end{algorithmic}
\end{algorithm}

\begin{algorithm}
\caption{Procedure to compute a single entry $i$ of the vector solution $\bar{x}_i$.}\label{pseudocode1}
\begin{algorithmic}
\Procedure{MLMCL}{$i,\Delta t_l,N,M$}
\State $m_l=0$, $m2l=0$
\For {$l=1,M$}
\State $\eta_1=1$, $\eta_2=1$, $j=i$
\For {$n=1,\ldots,N$}
\State $\eta_2=\eta_2 e^{d_{j} \Delta t_l/2} $
\If {$n\, mod\, 2\neq 0$}
\State $\eta_1=\eta_1 e^{d_{j} \Delta t_l} $
\EndIf
\State generate $\tau$ exponentially distributed 
\While{$\tau<\Delta t_l$}
\State generate $S$ exponentially distributed 
\State $k=j$
\State generate $j$ according to Eq.(\ref{backward2}) 
\State $\tau=\tau+S$
\State $\eta_2=(-1)^{\sigma_{kj}}\eta_2$
\State $\eta_1=(-1)^{\sigma_{kj}}\eta_1$
\EndWhile
\State $\eta_2=\eta_2 e^{d_{j} \Delta t_l/2} $
\If {$n\, mod\, 2= 0$}
\State $\eta_1=\eta_1 e^{d_{j} \Delta t_l} $
\EndIf

\EndFor
\State $m_l=m_l+[u_j (\eta_2-\eta_1)]/M$
\State $m2l=m2l+[u_j (\eta_2-\eta_1)]^2/M$
\EndFor
\State $V_l=m2l/M-m_l^2$

\hspace{-0.35cm} \Return $(m_l,V_l)$
\EndProcedure
\end{algorithmic}
\end{algorithm}

\section{Convergence and Computational complexity of the multilevel algorithm}\label{complexity}

The computational complexity of any MLMC algorithm can be established properly resorting to Theorem 1 in \cite{Giles1}. However, it is mandatory to characterize previously
the convergence of some important quantities such as the mean $|E[P_l-P]|$ and variance $V[P_l-P_{l-1}]$, as well as the computational time of the Monte Carlo algorithm, as a function of 
the level $l$. 

Concerning the scaling of the mean $|E[P_l-P]|$ with the level $l$, it can be readily estimated as follows.
Since $E[P]$ corresponds to the theoretical solution, $x= e^{\beta A}\,u$, obtained probabilistically in practice when $N\to\infty$, $|E[P_l-P]|$ corresponds in fact to the truncation error $|E[P_l]-x]|$. Recall that this was considered previously as being due to the Strang splitting method. Therefore,  the local error after one time step $\varepsilon_S$ of this approximation 
is known \cite{Jahnke} to be
\begin{equation}
\varepsilon_S=\Delta t_l^3(\frac{1}{12} [D,[D,T]]-\frac{1}{24} [T,[T,D]])\,u+\mathcal{O}(\Delta t_l^4),\label{errorStrang}
\end{equation}
and globally of order $\mathcal{O}(\Delta t_l^2)$. This is in agreement with Fig. \ref{fig_mlVl}(a), where the mean $|E[P_l-P_{l-1}]|$ is plotted as a function of the level $l$ for the example 
consisting in simulations of a small-world network of three different sizes.

Characterizing the variance $V[P_l-P_{l-1}]$ as a function of the level $l$ turns out to be a much more involved procedure. To start, it holds that
\begin{equation}
V[P_l-P_{l-1}]=E[(P_l-P_{l-1})^2]-(E[P_l-P_{l-1}])^2\leq E[(P_l-P_{l-1})^2].
\end{equation}
Hence, the problem can be reduced to the problem of estimating $E[(P_l-P_{l-1})^2]$. For this purpose, and as a preliminary step, it will be estimated next a partial result regarding the random 
variable  $\eta^{(2)}_l$ and $\eta^{(1)}_{l-1}$, and then the final result will be estimated accordingly. These random variables are obtained when generating paths for a single 
time step (when the  level is $l-1$),  and two consecutive time steps (when the level is $l$). The superscripts $(2)$ and $(1)$ denote two and one consecutive
steps respectively. Therefore, we first establish the following Lemma. 
\begin{lemma}\label{Lemma1}
Let $j$ and $k$ discrete random variables that take values on  $\Omega=\{1,2\cdots,n\}$, with probability $p_{ij}(t)$ and $p_{jk}(t)$ given by the transition probabilities of a 
continuous-time Markov chain generated by the infinitesimal generator $Q=-(L)_{ij}$ and evaluated at time $\Delta t_l$. Then, it holds that 
\begin{equation}
\mathbf{E}[(\eta^{(2)}_l-\eta^{(1)}_{l-1})^2]=\mathcal{O}(\Delta t_l^3),
\end{equation}
where $\eta^{(2)}_l=e^{\Delta t_l\,d_i/2}  e^{\Delta t_l\,d_j} e^{\Delta t_l\,d_k/2}\,u_k$, and 
$\eta^{(1)}_{l-1}=e^{\Delta t_l\,d_i} e^{\Delta t_l\,d_k}\,u_k$, respectively. 
\end{lemma}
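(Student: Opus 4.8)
The plan is to exploit the fact that the two random quantities coincide \emph{exactly} on the event that the underlying continuous-time Markov path performs no jump during the relevant interval, while that event has complementary probability only $\mathcal{O}(\Delta t_l)$, and that on the jump event the two quantities differ by only $\mathcal{O}(\Delta t_l)$. Multiplying a squared deviation of order $\Delta t_l^2$ by a jump probability of order $\Delta t_l$ then produces the cubic order. The whole argument is a one-level-deep case distinction; no series expansion of the transition probabilities is needed.

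First I would record that, since $\Delta t_{l-1}=2\Delta t_l$, the deviation under study is
\[
\eta^{(2)}_l-\eta^{(1)}_{l-1}=u_k\Bigl(e^{\Delta t_l(d_i/2+d_j+d_k/2)}-e^{\Delta t_l(d_i+d_k)}\Bigr),
\]
and I would introduce the event $\mathcal{J}$ that the fine-level path (the one built with step $\Delta t_l$) makes at least one jump over its two consecutive steps. On $\mathcal{J}^{c}$ there is no jump, so $j=i$ and $k=i$, both exponents collapse to $2\Delta t_l\,d_i$, and the deviation vanishes identically; hence $\mathbf{E}[(\eta^{(2)}_l-\eta^{(1)}_{l-1})^2]=\mathbf{E}[(\eta^{(2)}_l-\eta^{(1)}_{l-1})^2\,\mathbbm{1}_{\mathcal{J}}]$.

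Next I would bound the integrand uniformly. Writing $a=\Delta t_l(d_i/2+d_j+d_k/2)$ and $b=\Delta t_l(d_i+d_k)$, both satisfy $|a|,|b|\le 2\Delta t_l\max_m|d_m|$, and $|a-b|=\Delta t_l\,\bigl|d_j-\tfrac12(d_i+d_k)\bigr|=\mathcal{O}(\Delta t_l)$; the mean value theorem gives $|e^{a}-e^{b}|\le e^{\max(|a|,|b|)}|a-b|$, which, using the a priori cap $\Delta t_l\le\Delta t_{l_0}$, is at most $C\,\Delta t_l$ with $C$ depending only on $\max_m|d_m|$ and on $\Delta t_{l_0}$. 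Multiplying by $|u_k|\le\max_m|u_m|$ yields $|\eta^{(2)}_l-\eta^{(1)}_{l-1}|\le C'\Delta t_l$ pointwise. I would then estimate $\mathbf{E}[\mathbbm{1}_{\mathcal{J}}]$: the probability that a step of length $\Delta t_l$ started at state $i$ produces no jump equals $e^{-L_{ii}\Delta t_l}$, and on $\mathcal{J}^{c}$ the second step again starts at $i$, so $\mathbf{E}[\mathbbm{1}_{\mathcal{J}}]=1-e^{-2L_{ii}\Delta t_l}\le 2L_{ii}\Delta t_l$. Combining the three facts, $\mathbf{E}[(\eta^{(2)}_l-\eta^{(1)}_{l-1})^2]\le (C')^2\Delta t_l^2\cdot 2L_{ii}\Delta t_l=\mathcal{O}(\Delta t_l^3)$.

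The main obstacle is conceptual rather than computational: a naive bound $\mathbf{E}[(\cdot)^2]\le\|(\cdot)\|_\infty^2$ only delivers $\mathcal{O}(\Delta t_l^2)$, so one genuinely must harvest the extra factor $\Delta t_l$ from the jump probability, and this is legitimate only because the deviation cancels \emph{exactly} — not merely to leading order — on the no-jump event. The secondary point I would check with care is that this exact cancellation relies on the specific grouping of the Strang half-steps (the two interior half-steps at $j$ merging into $e^{\Delta t_l d_j}$, and the coarse-level half-steps at $k$ merging into $e^{\Delta t_l d_k}=e^{\Delta t_{l-1}d_k/2}$); I would also note that the $\phi$ sign factors, suppressed in the statement, do not affect the estimate, since on the $\mathcal{O}(\Delta t_l)$-probability single-jump event the same sign appears at both levels.
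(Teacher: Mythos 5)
Your proof is correct, and it rests on the same pivotal observation as the paper's: the only outcome carrying $\mathcal{O}(1)$ probability is the one where both random indices stay at $i$, and there the two functionals agree, so the squared deviation (of size $\mathcal{O}(\Delta t_l^2)$) is only ever integrated against a jump probability of size $\mathcal{O}(\Delta t_l)$. The execution differs, though. The paper Taylor-expands both $\eta^{(2)}_l-\eta^{(1)}_{l-1}$ (isolating the linear coefficient $\xi=(-d_i/2+d_j-d_k/2)u_k$) and the outcome probabilities $p_{\alpha_1\alpha_2}$ over an explicit four-case enumeration of $(j,k)$, then notes that the single case with leading-order probability one, namely $j=k=i$, forces $\xi=0$. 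You instead condition on the path-level no-jump event $\mathcal{J}^c$, observe the \emph{exact} (not merely leading-order) cancellation there, and close the estimate with a uniform mean-value-theorem bound $|\eta^{(2)}_l-\eta^{(1)}_{l-1}|\le C'\Delta t_l$ together with $\mathbf{P}(\mathcal{J})\le 2L_{ii}\Delta t_l$. Your route buys rigor: it dispenses with the paper's somewhat informal $\mathbf{E}[\mathcal{O}(\Delta t_l^3)]$ remainder bookkeeping and with the expansion of the transition probabilities altogether, and it makes explicit where the constants come from. The paper's route buys slightly more information: by identifying $\xi$ and the first-order coefficients of $p_{\alpha_1\alpha_2}$ it in principle exhibits the leading constant in the $\mathcal{O}(\Delta t_l^3)$ term, which your one-sided bound does not. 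Your closing remarks on the role of the Strang half-step grouping and the $\phi$ sign factors are sound and are points the paper passes over silently.
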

\begin{proof}
Expanding $\eta^{(2)}_l$ and $\eta^{(1)}_{l-1}$ in powers of $\Delta t_l$ yields
\begin{equation}
\mathbf{E}[(\eta^{(2)}_l-\eta^{(1)}_{l-1})^2]=\Delta t_l^2 \mathbf{E}[\xi^2]+\mathbf{E}[\mathcal{O}(\Delta t_l^3)],
\end{equation}
where $\xi=(-d_i/2+d_j-d_k/2)u_k$. The possible outcomes of the random variables $j$, and $k$ can be one of the following four different cases: (a) $j\neq k\neq i$; 
(b) $j\neq i,k=j$; (c) $j=i,k\neq i$ and (d) $j=k=i$ (see Fig. \ref{path} for illustration). To distinguish among them, consider one pair of binary variables 
$(\alpha_1, \alpha_2)$, taking values 
$\{(0,0),(0,1),(1,0),(1,1)\}$ and corresponding to the cases $a,b,c$, and $d$, respectively.
From the transition probabilities of the corresponding continuous-time Markov chain, the probability of obtaining each of them is given by
\begin{equation}
p_{\alpha_1\alpha_2}=[\alpha_1\,e^{\Delta t_l\,d_i}+(1-\alpha_1)(1-\,e^{\Delta t_l\,d_i})][\alpha_2\,e^{\Delta t_l\,d_j}+(1-\alpha_2)(1-\,e^{\Delta t_l\,d_j})]
\end{equation}
By expanding $p_{\alpha_1\alpha_2}$ in powers of $\Delta t_l$, we have
\begin{equation}
p_{\alpha_1\alpha_2}=\alpha_1\,\alpha_2 +C\,\Delta t_l+\mathcal{O}(\Delta t_l^2),
\end{equation}
where $C$ depends merely on $d_i,d_j,d_k$. Note that for the case (d), which corresponds to $(\alpha_1,\alpha_2)=(1,1)$, $\xi$ turns out to be $0$, therefore it follows that 
$\mathbf{E}[(\eta^{(2)}_l-\eta^{(1)}_{l-1})^2]$ is $\mathcal{O}(\Delta t_l^3)$ and the proof is complete.
\end{proof}

\begin{figure}[!t]
\hspace{-1cm}
\includegraphics[width=2.5in,angle=-90]{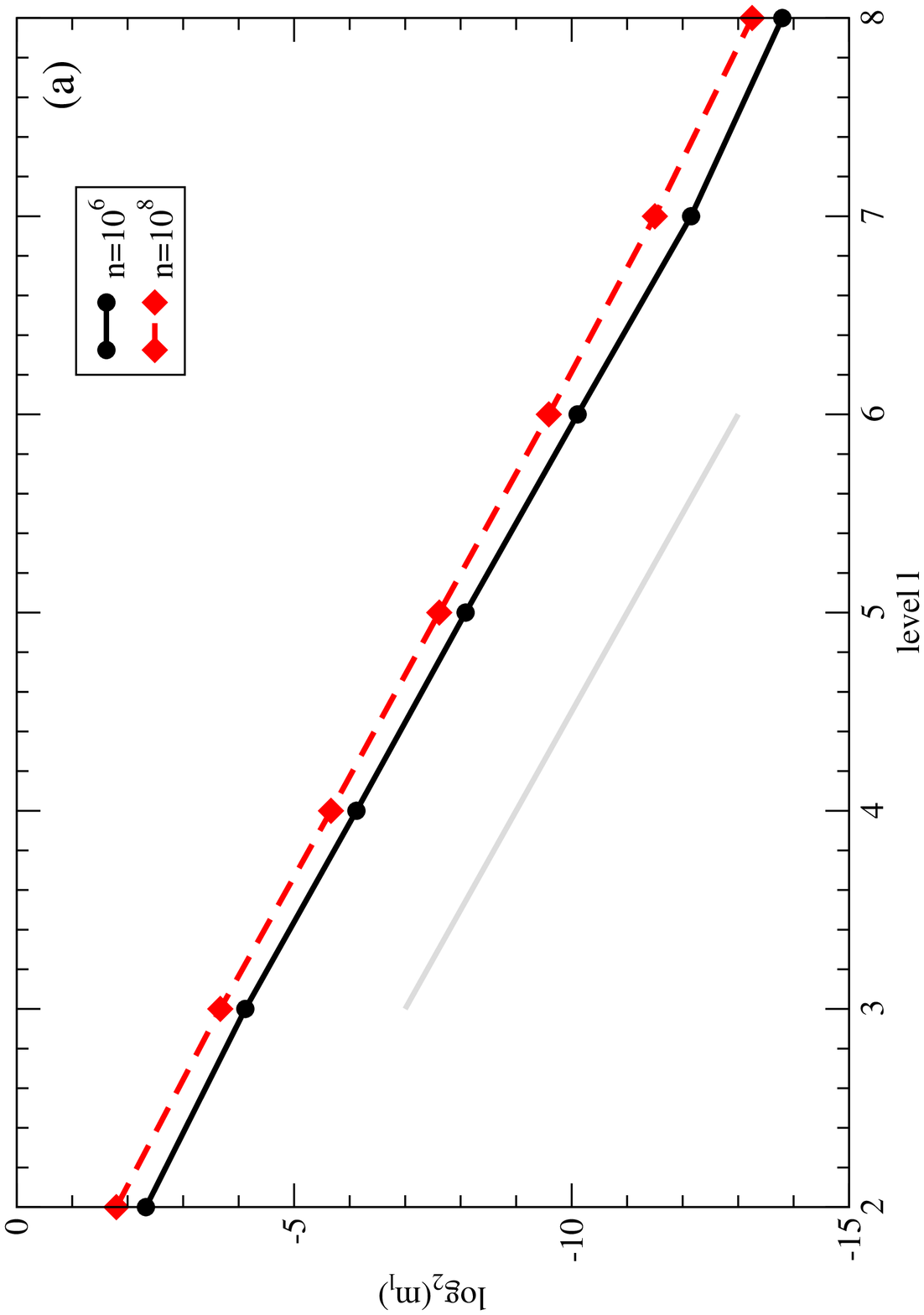}
\includegraphics[width=2.5in,angle=-90]{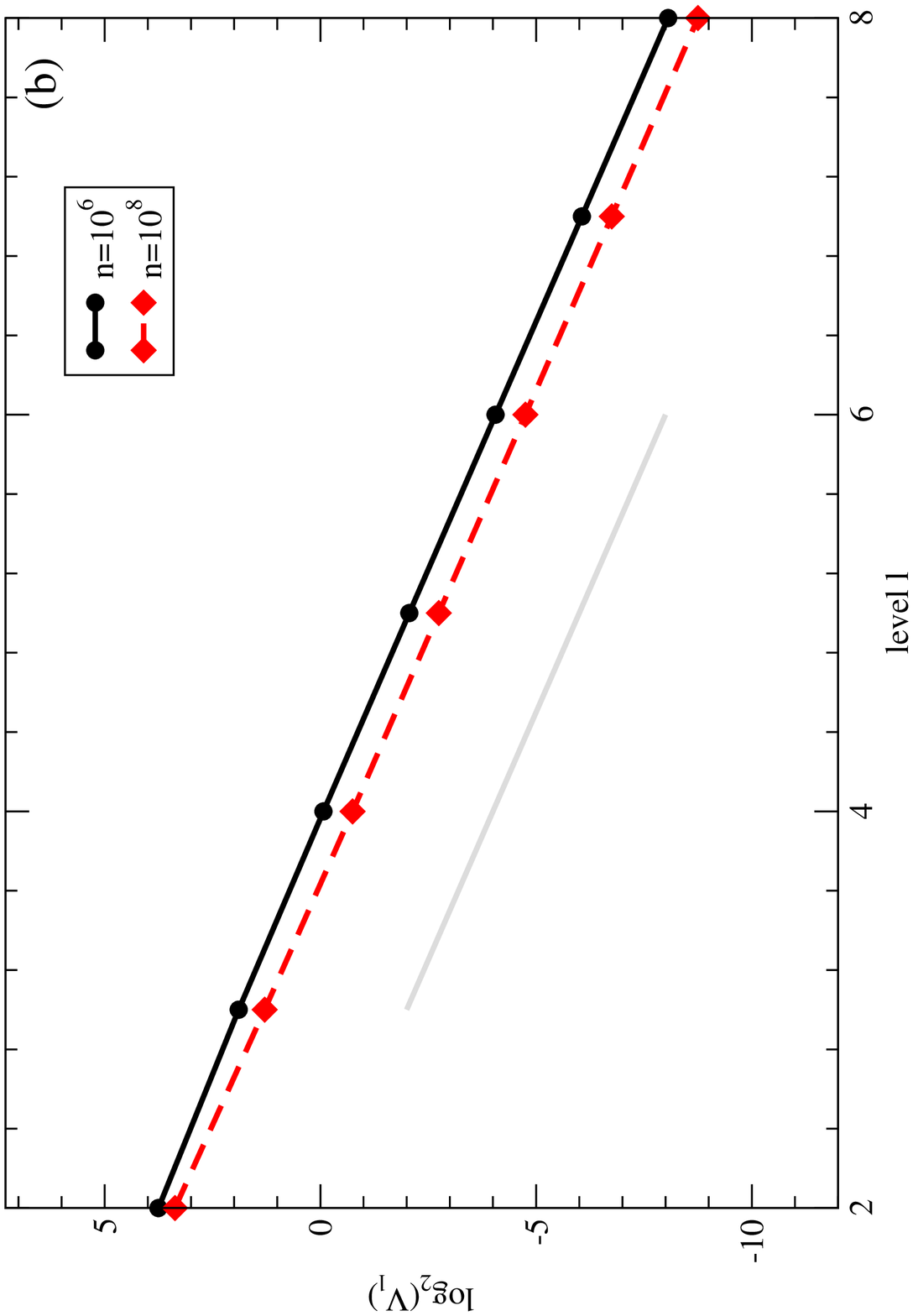}
\caption{(a) Mean $m_l$ and (b) variance $V_l$ of $P_l-P_{l-1}$ in $log_2$ scale versus the level number $l$ obtained numerically. The adjacency matrix corresponds to a small-world network of different sizes $n$. The brown line denotes an ancillary function of slope $-2$.}
\label{fig_mlVl}
\end{figure}

To find the global convergence rate, consider the following Lemma.

\begin{lemma}
Assume $i_k$, with $k=1,\cdots,N$, are $N$ discrete random variables taking values on $\Omega=\{1,2\cdots,n\}$, with probability $p_{i_k\, i_{k+1}}(t)$ given by the transition probabilities of a continuous-time Markov chain generated by the infinitesimal generator $Q=-(L)_{ij}$ and evaluated at time $\Delta t_l$. Then, it holds that
\begin{equation}
\mathbf{E}[(P_l-P_{l-1})^2]=\mathcal{O}(\Delta t_l^2)
\end{equation}
\end{lemma}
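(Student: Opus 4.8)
The plan is to lift Lemma~\ref{Lemma1}, which controls a single block, to the whole multiplicative functional. Group the $N_l$ level-$l$ time steps into $K:=N_l/2=\mathcal{O}(\Delta t_l^{-1})$ consecutive blocks of two steps each; every block has duration $\Delta t_{l-1}=2\Delta t_l$, hence corresponds to one level-$(l-1)$ step. Along the coupled pair of paths generated by Algorithm~\ref{pseudocode1} this factorises the functionals as $P_l=\prod_{m=1}^{K}\zeta^{(m)}_l$ and $P_{l-1}=\prod_{m=1}^{K}\zeta^{(m)}_{l-1}$, where $\zeta^{(m)}_l$ and $\zeta^{(m)}_{l-1}$ are exactly of the form of $\eta^{(2)}_l$ and $\eta^{(1)}_{l-1}$ in Lemma~\ref{Lemma1}. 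Two structural facts about this coupling will be used: the random sign $\phi^{(m)}$ is common to the fine and the coarse copy of block $m$ (Algorithm~\ref{pseudocode1} updates both accumulators with the same factor at every jump), so it cancels in $\zeta^{(m)}_l-\zeta^{(m)}_{l-1}$; and on the event that no jump occurs inside block $m$ one has $\zeta^{(m)}_l=\zeta^{(m)}_{l-1}$ exactly (this is case $(d)$ in the proof of Lemma~\ref{Lemma1}). Finally, the terminal weight $u_{i_N}$ is common to $P_l$ and $P_{l-1}$ and uniformly bounded, so we may factor it out and assume $u\equiv1$; then every block factor obeys $(\zeta^{(m)}_\bullet)^2\le e^{4\Delta t_l\|{\bf d}\|_\infty}\le1+C\Delta t_l$ and any partial product is bounded by a constant $B=B(\beta,\|{\bf d}\|_\infty)$.

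The core step is a one-block recursion. Put $\Delta^{(m)}:=\prod_{r\le m}\zeta^{(r)}_l-\prod_{r\le m}\zeta^{(r)}_{l-1}$, so that $\Delta^{(0)}=0$, $\Delta^{(K)}=P_l-P_{l-1}$, and
\[
\Delta^{(m)}=\zeta^{(m)}_l\,\Delta^{(m-1)}+\big(\zeta^{(m)}_l-\zeta^{(m)}_{l-1}\big)\prod_{r<m}\zeta^{(r)}_{l-1}.
\]
Squaring and taking expectations gives, for $e_m:=\mathbf{E}[(\Delta^{(m)})^2]$, three terms. The first, $\mathbf{E}[(\zeta^{(m)}_l)^2(\Delta^{(m-1)})^2]$, is $\le(1+C\Delta t_l)e_{m-1}$ by the uniform bound on $(\zeta^{(m)}_l)^2$. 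For the other two I would condition on the state $i$ at the start of block $m$ --- which is common to the fine and coarse paths --- so that by the Markov property the block increment is independent of everything accumulated before it. Then the ``square'' term is $\le B^2\sup_i\mathbf{E}[(\zeta^{(m)}_l-\zeta^{(m)}_{l-1})^2\mid i]=\mathcal{O}(\Delta t_l^3)$ by Lemma~\ref{Lemma1}, and the cross term factorises as $2\,\mathbf{E}[\mathbf{E}[\zeta^{(m)}_l(\zeta^{(m)}_l-\zeta^{(m)}_{l-1})\mid i]\cdot\mathbf{E}[\Delta^{(m-1)}\prod_{r<m}\zeta^{(r)}_{l-1}\mid i]]$, which, bounding the second inner expectation by $B\sqrt{\mathbf{E}[(\Delta^{(m-1)})^2\mid i]}$ and using Jensen, is at most $C\sqrt{e_{m-1}}\,\sup_i|\mathbf{E}[\zeta^{(m)}_l(\zeta^{(m)}_l-\zeta^{(m)}_{l-1})\mid i]|$. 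Here the decisive estimate $\sup_i|\mathbf{E}[\zeta^{(m)}_l(\zeta^{(m)}_l-\zeta^{(m)}_{l-1})\mid i]|=\mathcal{O}(\Delta t_l^2)$ enters: the no-jump block contributes $0$ and has probability $1-\mathcal{O}(\Delta t_l)$, while on the complementary event $\zeta^{(m)}_l-\zeta^{(m)}_{l-1}=\mathcal{O}(\Delta t_l)$ --- this is the computation in the proof of Lemma~\ref{Lemma1}, carried for the first moment rather than the second. Collecting, $e_m\le(1+C\Delta t_l)e_{m-1}+C\Delta t_l^2\sqrt{e_{m-1}}+C\Delta t_l^3$; linearising the middle term by AM--GM, $\sqrt{e_{m-1}}\le\tfrac{1}{2}(\Delta t_l^{-1}e_{m-1}+\Delta t_l)$, turns this into $e_m\le(1+C'\Delta t_l)e_{m-1}+C'\Delta t_l^3$, and since $e_0=0$ the discrete Gr\"onwall inequality gives $e_K\le C'\Delta t_l^3\,K\,e^{C'K\Delta t_l}=\mathcal{O}(\Delta t_l^2)$ because $K\Delta t_l=\beta/2$. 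Hence $\mathbf{E}[(P_l-P_{l-1})^2]=e_K=\mathcal{O}(\Delta t_l^2)$, consistent with the slope $-2$ in Fig.~\ref{fig_mlVl}(b).

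I expect the cross term to be the main obstacle. The naive route --- bounding $|\mathbf{E}[\zeta^{(m)}_l(\zeta^{(m)}_l-\zeta^{(m)}_{l-1})\mid i]|$ by Cauchy--Schwarz, which only gives $\mathcal{O}(\Delta t_l^{3/2})$ --- leads to the recursion $e_m\le(1+C'\Delta t_l)e_{m-1}+C'\Delta t_l^2$ and therefore, after accumulation over the $K\asymp\Delta t_l^{-1}$ blocks, to the weaker bound $\mathcal{O}(\Delta t_l)$, which does not reproduce the observed $\mathcal{O}(\Delta t_l^2)$ rate. Getting the sharp exponent forces one to estimate the conditional \emph{mean} of the block-wise fine--coarse mismatch, which works only because of the two structural facts noted above (shared sign; exact agreement on the no-jump event) together with the Markov property used to decouple a block from the accumulated past. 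A minor but necessary bookkeeping point is to check that the two boundary half-step exponentials and the terminal weight are distributed among the blocks so that each block literally has the form covered by Lemma~\ref{Lemma1}.
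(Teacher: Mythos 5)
Your proof is correct, and it takes a genuinely different route from the paper's. The paper linearizes both products at once: it writes $\mathbf{E}[(P_l-P_{l-1})^2]=\Delta t_l^2\,\mathbf{E}\bigl[\bigl(\sum_{j=1}^{N/2}\xi_j\bigr)^2\bigr]+\mathcal{O}(\Delta t_l^3)$ with $\xi_j=d_{i_{k+1}}-d_{i_k}/2-d_{i_{k+2}}/2$, disposes of the $\mathcal{O}(N^2)$ cross terms by asserting that the $\xi_j$ are independent with $\mathbf{E}[\xi_j]=0$, and is left with the diagonal sum $\tfrac{N}{2}\Delta t_l^2\,\mathbf{E}[\xi_1^2]$, which is $\mathcal{O}(\Delta t_l^2)$ by the case analysis of Lemma \ref{Lemma1} and $N\Delta t_l=\beta$. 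Your block recursion plus discrete Gr\"onwall reaches the same conclusion without either assertion --- and both are fragile: consecutive $\xi_j$ share the endpoint state $i_{2j+1}$, so they are not literally independent, and $\mathbf{E}[\xi_j]=0$ requires the one-step marginals of $d_{i_k}$ to be constant in $k$, a stationarity-type hypothesis rather than a general fact. You replace them by Markov-property conditioning on each block's starting state and by the per-block first-moment bound $|\mathbf{E}[\zeta^{(m)}_l(\zeta^{(m)}_l-\zeta^{(m)}_{l-1})\mid i]|=\mathcal{O}(\Delta t_l^2)$, which is the quantitative surrogate for ``the mean of the mismatch is negligible'' that the accumulation over $K\asymp\Delta t_l^{-1}$ blocks actually needs. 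Your closing diagnosis is also accurate: a blunt Cauchy--Schwarz on the cross term only yields $\mathcal{O}(\Delta t_l)$ overall, so the first-moment estimate (exact agreement on the no-jump event, probability $\mathcal{O}(\Delta t_l)$ and mismatch $\mathcal{O}(\Delta t_l)$ otherwise) is where the sharp rate is won; in the paper that same gain is hidden inside the claim that the cross terms vanish identically. The price of your route is length and the bookkeeping of half-step factors and the terminal weight, which you correctly flag; what it buys is a derivation that does not rest on the unproved independence and zero-mean claims.
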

\begin{proof}
Expanding  $\eta^{(j)}_l$ and $ \eta^{(j)}_{l-1}$ in Eq. (\ref{prob_level_l}) in powers of $\Delta t_l$, we have
\begin{equation}
\mathbf{E}[(P_l-P_{l-1})^2])=\Delta t_l^2 \mathbf{E}[(\sum_{j=1}^{N/2}\xi_j)^2]+\mathcal{O}(\Delta t_l^3),
\end{equation}
where $\xi_j=(d_{i_k}/2+d_{i_{k+1}}+d_{i_{k+2}}/2)-(d_{i_k}+d_{i_{k+2}})$, and $k=2j-1$. Taking into account that $\xi_j$ are independent random variables governed by the same probability
function, it follows that
\begin{equation}
\mathbf{E}[(\sum_{j=1}^{N/2}\xi_j)^2]=\sum_{j=1}^{N/2} \mathbf{E}[\xi_j^2]+2\sum_{j<k}\mathbf{E}[\xi_j]\mathbf{E}[\xi_k]. 
\end{equation}
Since $\mathbf{E}[d_{i_k}]$ is independent of $i_k$, then  $\mathbf{E}[\xi_j]=0$. Therefore, we have   
\begin{equation}
\sum_{j=1}^{N/2} \mathbf{E}[\xi_j^2]=\frac{N}{2}\mathbf{E}[\xi_1^2].
\end{equation}
Finally from Lemma \ref{Lemma1} it turns out that $\mathbf{E}[\xi_1^2]$ is of order $\mathcal{O}(\Delta t_l^3)$
and since $\Delta t_l=\beta/N_l$ by definition, then it follows that $\mathbf{E}[(P_l-P_{l-1})^2])=\mathcal{O}(\Delta t_l^2)$.
\end{proof}

In Fig. \ref{fig_mlVl}(b), $V[P_l-P_{l-1}]$ is shown as a function of the level $l$. The adjacency matrices correspond to a small-world network of three different sizes. Note that the obtained numerical convergence rate fully agrees with the theoretical estimation.

\begin{figure}[!t]
\includegraphics[width=4.5in,angle=-90]{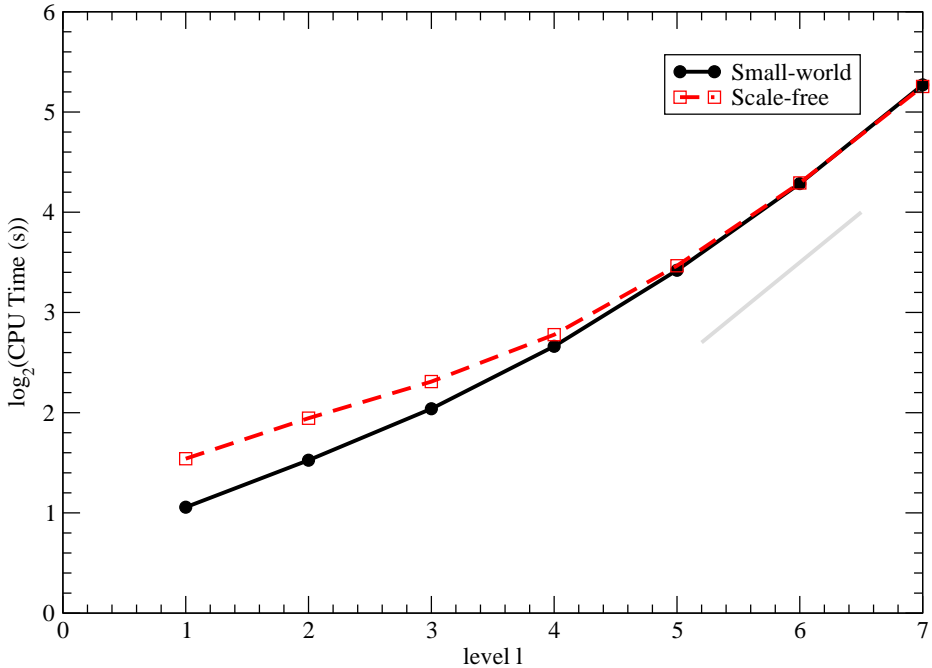}
\caption{Computational time in $log_2$ scale versus the level $l$ for adjacency matrices corresponding to two different complex networks of size $n=10^6$. The brown line corresponds to an ancillary function of slope $1$.}
\label{fig_cpu}
\end{figure}

The computational time of the Monte Carlo algorithm was already estimated in \cite{Acebron_SISC}, and it is given by
\begin{equation}
T_{CPU}=\alpha_{in}\beta \bar{d}\,M+\alpha_{out}\frac{\beta}{\Delta t_l}\,M. \label{tcpu}
\end{equation}
Here $\bar{d}$ is $\bar{d}=\frac{1}{n}\sum_{i=1}^n d_i$, while $\alpha_{in}$ and $\alpha_{out}$ are suitable proportionality constants. In Fig.  \ref{fig_cpu}, the results corresponding to the CPU time spent by the Monte Carlo algorithm when computing the total communicability of two different networks characterized by different values of $\bar{d}$ is shown. The results are in agreement with the theoretical estimation in Eq. (\ref{tcpu}).
In particular, note that for $\Delta t_l$ sufficiently large (or equivalently $l$ sufficiently small) the computational time tends to a constant value, while for smaller values the computational time scales as $1/\Delta t_l$. 
This also explains what was mentioned previously in Section \ref{theory}, which is that the initial level $l_0$ of the multilevel method could be different from zero to obtain a better performance of the MLMC algorithm.
In fact, depending on the value of $\Delta t_l$ and consequently on the level $l$, two different working regimes can be observed, and only for the regime characterized by a 
value of $\Delta t_l$ sufficiently small, the computational time asymptotically increases with $l$. Specifically this occurs when the contribution to the computational time of the second 
term in Eq. (\ref{tcpu}) is much larger than the first term. Assuming that the value of the proportionality constants $\alpha_{in}$, $\alpha_{out}$ are similar, we can readily estimate 
the minimum value of the level needed for this purpose, and is given by
\begin{equation}
l_0 \gg log_2{(\beta \bar{d})}.
\end{equation}
However, in general both constants $\alpha_{in}$, and $\alpha_{out}$ are not only different, but also difficult to be theoretically estimated. From numerical simulations, however, a more practical lower bound has been found and is given by
\begin{equation}
l_0=log_2{(2 \beta d_{max})},\label{lowerbound}
\end{equation}
where $d_{max}$ corresponds to the maximum value of the diagonal matrix $D$.  In Fig. \ref{fig_cpu2} the computational time spent by the MLMC method for different values of the initial level $l_0$ is shown. Here the MLMC was applied to the problem of computing the total communicability \cite{Benzi2} of two different networks, small-world and scale-free, of size $n=10^6$. For the small-world network the maximum degree is $7$, while for the scale-free 
network is $3763$. The value of $\beta$ was chosen to be $1$ for the small-world network and $1/d_{max}$ for the scale-free network. The last one was chosen 
specifically to ensure the convergence of the method, as it was pointed out in \cite{Acebron_SISC}. Note that, for both networks, the computational time attains a
minimum at a specific value of $l_0$, which is well approximated by Eq. ({\ref{lowerbound}}).

\begin{figure}[!t]
\includegraphics[width=4.5in,angle=-90]{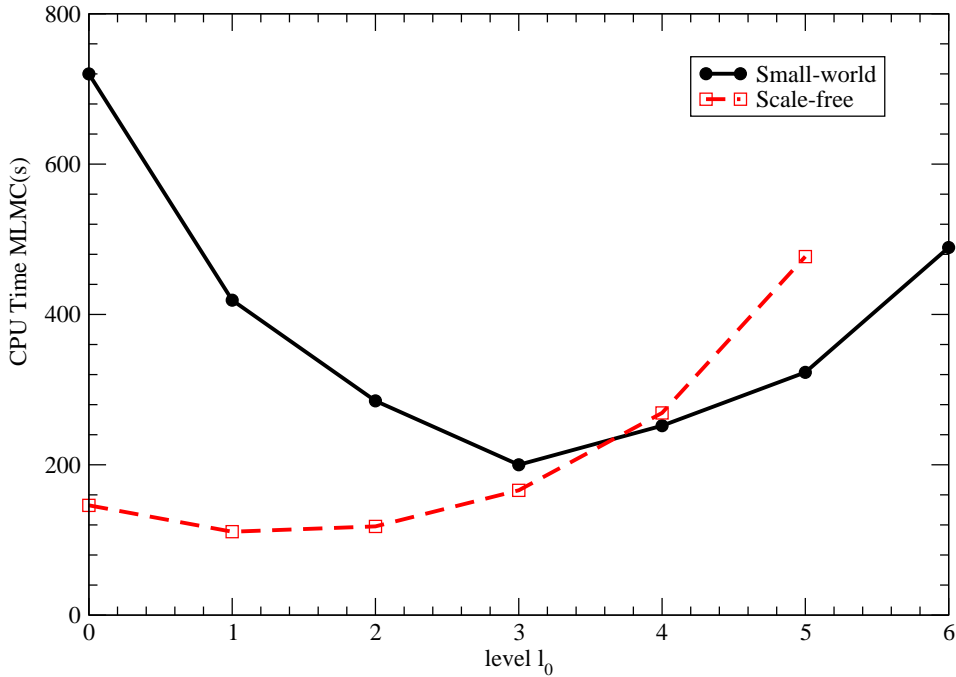}
\caption{Computational time of the MLMC method for different values of the initial level $l_0$. The matrices correspond to the adjacency matrices of two different complex networks of size $n=10^6$. The accuracy for the small-world network is $\varepsilon=6.25\times10^{-4}$, while for the scale-free network is $\varepsilon=2\times10^{-7}$.}
\label{fig_cpu2}
\end{figure}

\begin{figure}[!t]
\includegraphics[width=4.5in,angle=-90]{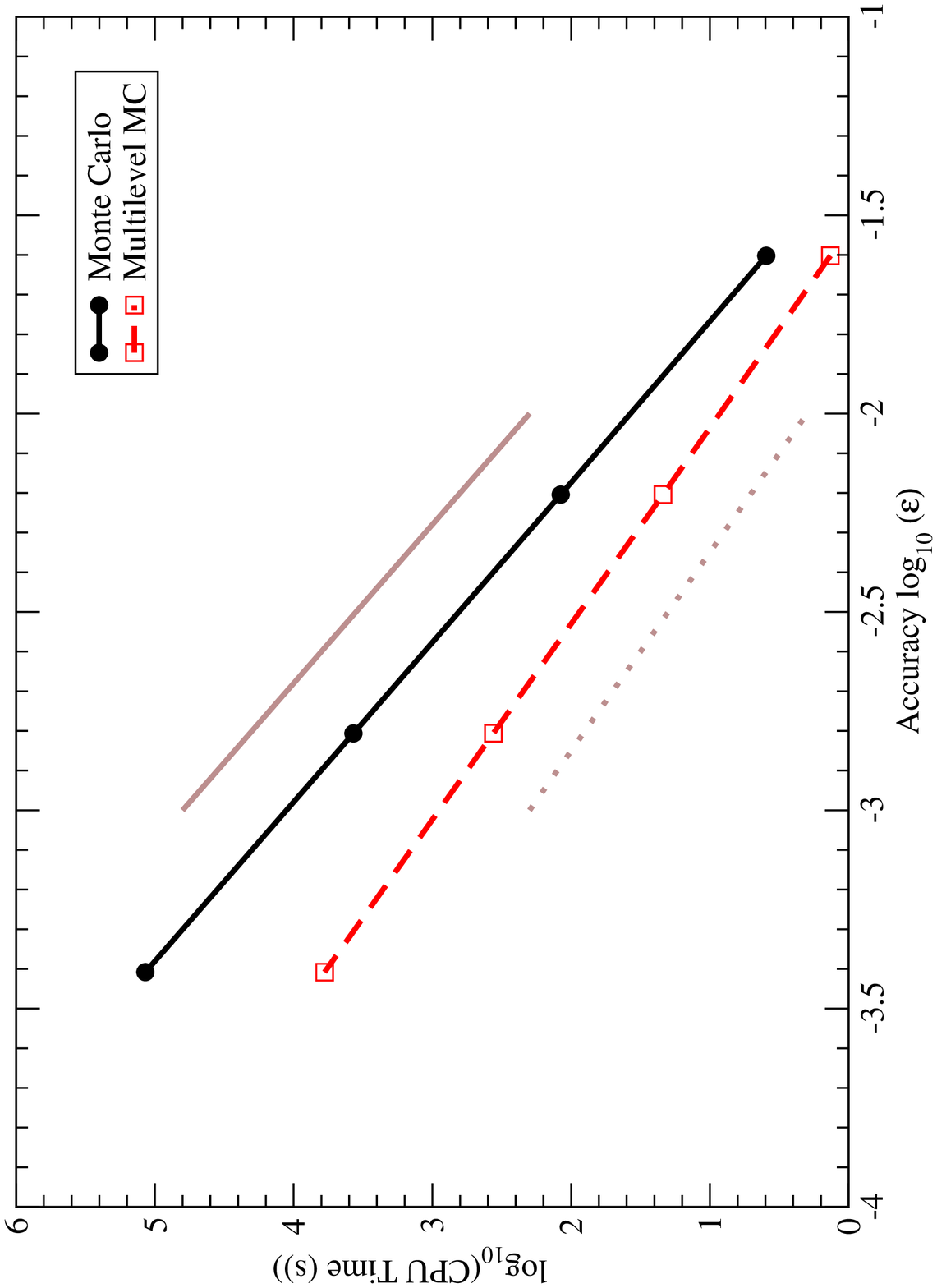}
\caption{Computational time as a function of the prescribed accuracy $\varepsilon$, both in $log_{10}$ scale. The brown solid line corresponds to an ancillary function of slope $-5/2$,
while the dotted line to a function of a slope $-2$. The results correspond to the communicability for a single node of a small-world network of size $n=10^6$}
\label{fig_complexity}
\end{figure}

In view of the convergence rates estimated above, we can apply the aforementioned Theorem 1 in \cite{Giles1} and conclude that the computational complexity of the proposed MLMC 
algorithm is of order $\mathcal{O}(\varepsilon^{-2})$.  In practice this means that the error due to the splitting in Eq. (\ref{eq_general1}) is totally canceled out from the algorithm, 
remaining only the computational cost inherent to any Monte Carlo method due to the statistical error. Rather, the complexity of the classical Monte Carlo algorithm proposed in \cite{Acebron_SISC} is of order of  $\varepsilon^{-5/2}$. Indeed 
this can be readily proved as follows. Concerning the statistical error, the sample size $M$ required to achieve a prescribed accuracy $\varepsilon$ is given by $M=\varepsilon^{-2}$, while for 
the splitting error, being the method of order of $\Delta t^2$, the time step required for a given $\varepsilon$ is $\Delta t=\varepsilon^{1/2}$. Therefore, the computational complexity, which 
depends on $M/\Delta t$, is given by  $\mathcal{O}(\varepsilon^{-5/2})$. In Fig. \ref{fig_complexity} the results corresponding to the computational time spent to 
compute the communicability of a single node of a small-world
network of size $n=10^6$ are plotted as a function of a chosen prescribed accuracy $\varepsilon$ for both, the multilevel Monte Carlo and the classical Monte Carlo method.  Note the perfect
agreement with the theoretical estimates, and the performance notably superior to the classical Monte Carlo method in \cite{Acebron_SISC} for lower accuracy values.

\section {Performance evaluation} \label{simulations}

To illustrate the performance of the multilevel Monte Carlo method, in the following we show the results corresponding to several benchmarks conducted so far. They concern  the numerical solution of a linear parabolic differential equation by means of an exponential integrator, as well as, the numerical computation of the total communicability metric in some complex synthetic networks. 

In fact, among an important application of the multilevel algorithm for computing the action of a matrix exponential, we have the numerical solution
of parabolic PDEs by means of the method of lines, using therefore an exponential integrator. When the method of lines \cite{Mazunder} is applied to an initial parabolic PDE problem discretizing the spatial variable, a system of coupled ordinary differential equations, with time as the independent variable, is obtained. Finally, the system can be solved resorting 
to the computation of a matrix exponential which acts on the discretized initial value function. The method was here applied for solving the Dirichlet boundary-value problem for both, a 3D heat equation, and a 3D convection-diffusion equation. The former problem is given by
\begin{eqnarray}
&& \frac{\partial u}{\partial t} = \nabla^2 u,
      \quad    \mbox{in} \
                      \Omega = [-\delta, \delta]^3, t > 0, 
\end{eqnarray}
with boundary- and initial-conditions
\begin{eqnarray}
\left. u({\bf x},t) \right|_{\partial \Omega} = 0, 
            \qquad
   u({\bf x},0) = f({\bf x}).
\end{eqnarray}
The approximated solution ${\hat u}(\bf x,t)$ where ${\bf x} \in {\bf R}^3$, ${\bf x}=(x,y,z)$, after discretizing in space with grid 
spacing $\Delta x=\Delta y=\Delta z=2\delta/n_x$, and using the standard $7-$point stencil finite difference approximation, can be written formally as
\begin{equation}
{\hat u}({\bf x},t)=e^{\frac{n_x^2 t}{4 \delta^2}{\hat L}}\,{\hat u}_0({\bf x}),\label{pde_lines}
\end{equation}
where ${\hat L}$ denotes the corresponding discretized Laplacian operator.


Concerning the convection-diffusion equation, mathematically we have

\begin{eqnarray}
&& \frac{\partial u}{\partial t} = \nabla^2 u+{\bf \beta}\cdot\nabla u,
      \quad    \
                      {\bf x}\in \Omega, t > 0,\nonumber \\
                 &&     \left.u({\bf x},t) \right|_{\partial \Omega} = g({\bf x,t}),\\
                  &&    u({\bf x},0) =f({\bf x}),\nonumber 
\end{eqnarray}
where ${\bf \beta}$ is the velocity field. After applying the standard Galerkin finite element method \cite{Zienkiewicz} to the discretized nodes ${\bf x}_i, i=1,\ldots,n$, the following
linear system of coupled first order ODEs is obtained
\begin{equation}
M\frac{d {\bf u}}{d t} = K{\bf u}+{\bf F},\quad {\bf u}(0)={\bf u}_0,\label{ODEs}
\end{equation}
where ${\bf u}=(u({\bf x}_1,t),\ldots,u({\bf x}_n,t))$, $M$ is the assembled mass matrix, $K$ is the corresponding assembled stiffness matrix, and ${\bf F}$ is 
the load vector. Concerning the boundary data, these are included modifying as usual the matrices and the vector.  For computational convenience, in the following the mass matrix was lumped \cite{Zienkiewicz}, resulting in practice in a diagonal mass matrix.

Formally, the solution of the inhomogeneous system of ODEs (\ref{ODEs}) can be written in terms of a matrix exponential as follows
\begin{equation}
{\bf u}({\bf x},t)=e^{-t\,M^{-1}K}{\bf u}_0+ \int_0^t ds\, e^{-s\,M^{-1}K}{\bf F}(t-s).\label{integral_ODE}
\end{equation}
Note that for the particular case of having time-independent boundary data, the load vector becomes therefore constant, and the solution simplifies to 
\begin{equation}
{\bf u}({\bf x},t)=e^{-t\,M^{-1}K}{\bf u}_0-K^{-1}M\left(e^{-t\,M^{-1}K}-\mathbbm{1}\right){\bf F}.\label{solution_ODE}
\end{equation}
On the other hand, for arbitrary time-dependent boundary data, the integral in Eq. (\ref{integral_ODE}) can be computed resorting to suitable numerical quadratures. This procedure can be followed in any case to avoid evaluating the inverse of the matrix $K$ in Eq. (\ref{solution_ODE}), which in general can be computationally costly. In fact, this was specifically used here for solving 
numerically the system of equations in (\ref{ODEs}). Since to compute the matrix exponential the error was estimated to be of order $\mathcal{O}(\Delta t^2)$ (see Sec. \ref{complexity}), to
avoid lowering down this order, in the following we have implemented the Simpson quadrature rule, which is known to be of much higher order. More specifically, the solution ${\bf \hat u}({\bf x},t)$ can be computed as follows
\begin{eqnarray}
{\bf \hat u}({\bf x},t)=e^{-t\,M^{-1}K}{\bf u}_0+\Delta t\left({\bf F}(t)+2\sum_{j=1}^{N/2-1}e^{-t_j\,M^{-1}K}{\bf F}(t-t_j)\right.\nonumber
\\\left.+4\sum_{j=1}^{N/2}e^{-t_j\,M^{-1}K}{\bf F}(t-t_j)+e^{-t\,M^{-1}K}{\bf F}(0)\right),
\end{eqnarray}
where $t_j=j\Delta t,\,j=1,\ldots,N$, and $\Delta t=t/N$. Note that we require to compute $N$ independent matrix exponential evaluations at $N$ different instants of time. However, it turns out that using the Algorithm \ref{pseudocode1}, in practice only a single evaluation at the final time $t$ is needed to compute. This is because when using the Monte Carlo method for computing the matrix exponential at time $t$, the information required to evaluate the matrix exponential at intermediate times have been also automatically generated by the algorithm.  In fact, the random paths generated up to time $t$, which have been simulated advancing in time steps of size $\Delta t$, can be used directly to evaluate the matrix exponential over 
the vector ${\bf F}$ at time $j\Delta t$. Moreover, it is worth observing that this can be accomplished without any additional computational cost. 

In practice this can be readily done modifying slightly the Algorithm, as it is shown in boldface in the new Algorithm \ref{new_pseudocode}. Here 
$\omega_n$ is a vector containing the suitable weights $(1, 2, 4,\ldots, 2,4,1)$ corresponding to the Simpson quadrature rule.

\begin{algorithm}
\caption{Procedure to compute a single entry $i$ of the vector solution ${\bf \hat u}({\bf x},t)$}\label{new_pseudocode}
\begin{algorithmic}
\Procedure{MLMCL-FEM}{$i,\Delta t_l,N,M$}
\State $m_l=0$, $m2l=0$,$integ_1=0$,$integ_2=0$
\For {$l=1,M$}
\State $\eta_1=1$, $\eta_2=1$, $j=i$
\For {$n=1,\ldots,N$}
\State $\eta_2=\eta_2 e^{d_{j} \Delta t_l/2} $
\If {$n\, mod\, 2\neq 0$}
\State $\eta_1=\eta_1 e^{d_{j} \Delta t_l} $
\EndIf
\State generate $\tau$ exponentially distributed 
\While{$\tau<\Delta t_l$}
\State $k=j$
\State generate $S$ exponentially distributed 
\State generate $j$ according to Eq.(\ref{backward2}) 
\State $\tau=\tau+S$
\State $\eta_2=(-1)^{\sigma_{kj}}\eta_2$
\State $\eta_1=(-1)^{\sigma_{kj}}\eta_1$
\EndWhile
\State $\eta_2=\eta_2 e^{d_{j} \Delta t_l/2} $
\If {$n\, mod\, 2= 0$}
\State $\eta_1=\eta_1 e^{d_{j} \Delta t_l} $
\EndIf
\State ${\bf \color{blue} integ_1=integ_1+\omega_n \eta_1}$
\State ${\bf \color{blue} integ_2=integ_2+\omega_n \eta_2}$
\EndFor

\State ${\bf \color{blue} m_l=m_l+[u_j (\eta_2-\eta_1)]/M+[F_j (integ_2-integ_1)]/M}$
\State ${\bf \color{blue} m2l=m2l+[u_j (\eta_2-\eta_1)]^2/M+[F_j (integ_2-integ_1)]^2/M}$
\EndFor
\State $V_l=m2l/M-m_l^2$

\hspace{-0.35cm} \Return $(m_l,V_l)$
\EndProcedure
\end{algorithmic}
\end{algorithm}


Other important application of the matrix exponential consists in computing the total communicability of a network.  By definition, the total communicability of a network 
\cite{Benzi2} is given by 
\begin{equation}
TC=({\bf 1},\,e^A\,{\bf 1}),
\end{equation}
where $\bf{1}$ is a vector of ones, and $(\cdot,\cdot)$ denotes the scalar product. In the following we analyze the total communicability for several networks consisting in generated synthetic networks of the type small-world and scale-free of arbitrary size. These networks have been generated in Matlab using the functions 
{\it smallw} and {\it pref}, respectively, both freely available through the toolbox CONTEST \cite{CONTEST}. 
In contrast to the small-world network, the scale-free networks are characterized by the presence of hubs, which in practice entail a much larger 
maximum eigenvalue than for the small-world networks. Then, since the value of this eigenvalue increases with the network size, and in order to keep constant the numerical error, it may be necessary for the MLMC method to increase strongly the number of required levels accordingly. To prevent such a computationally costly procedure, a reasonable alternative relies on computing a generalization of the communicability, that is $e^{\beta A}$, where $\beta$ is typically interpreted as an effective "temperature" of the network (see \cite{Estrada_review}, e.g.). Essentially the idea that was exploited in \cite{Acebron_SISC} was to use the inverse of the maximum eigenvalue as the value of the parameter $\beta$, which in practice will control the rapid growth of the norm of the matrix $A$ with the size of the network. Different values of $\beta$ could have a direct impact not only on the entries of the communicability vector, but also on the ranking of the nodes according to their communicability values. However, in practice this does not occur. Through the analysis of the intersection similarity of several networks \cite{Acebron_SISC} it was shown that the chosen value of $\beta$ does not affect significantly the results, being in all cases the differences well below the typical error tolerances, and even
becoming smaller for increasingly larger network sizes. Consequently, and to ensure fast convergence of the method, in the simulations below we have used $\beta=1/\lambda_{max}$, where $\lambda_{max}$ is the maximum eigenvalue of A. However, finding the maximum eigenvalue for large networks is itself computationally costly and, in the following, a faster alternative 
based on computing the maximum degree of the network, $d_{max}$, was used instead as an upper bound value.


\subsection{Shared memory architecture}

The simulations corresponding to the shared memory architecture were run on both a commodity server equipped with $12$ cores and $32$ GB of RAM, and the MareNostrum supercomputer using a single node 
with $48$ cores.
The MLMC algorithm has been implemented in OpenMP, and to compare the performance with other methods, as well as to control the numerical errors, the MATLAB toolbox {\it funm$-$kryl} freely available  in \cite{funkryl} has been used. This method consists in the implementation of a Krylov subspace method with deflated restarting for matrix functions \cite{Higham2}. 
Note that Matlab was originally written in C/C++ and, specifically, operations involving matrix-vector multiplication or
matrix-matrix multiplication show nowadays an optimal performance in the latest versions of Matlab, since they are exploiting very efficiently 
multithreading execution as well as SIMD units available in current microprocessors. Taking into account that the Krylov subspace method requires 
matrix-vector multiplications extensively, we assume the 
obtained performance 
of the Matlab code to be more than competitive with respect to the performance of a native code in C/C++. Moreover, our implemented OpenMP code was not optimized 
to ensure a fair comparison with Matlab. Finally, it is worth remarking that the choice for using Matlab for comparison and not a native code was essentially motivated by the lack of any parallel code freely available in C/C++.

%


\noindent {\bf Example A: Partial Differential equations}.

The computational  time spent  by both, the  MC  and MLMC 
method, for solving the initial-boundary value problem  for a 3D heat equation at a single point is shown in Tables \ref{Table4a} 
and \ref{Table4b}. This has been done for different matrix sizes and number of cores running on the 
commodity server, and for about the same accuracy. It is worth observing that in view of the probabilistic nature of any Monte Carlo-based algorithm, the measured computational time for a single simulation  cannot be uniquely defined. Therefore,  in the following and for convenience, for both the Monte Carlo and MLMC method, the computational times reported in all tables have been chosen to be the most favorable simulation in terms of elapsed time, obtained after repeating the simulations a few times. Concerning  the  error, this  was  estimated  using the  aforementioned Krylov-based   method  by   setting   a  very   small   value  of   the stopping-accuracy  parameter,  $10^{-16}$,  as   well  as  the  restart parameter  to $40$.

For  comparison,  the  computational  time  spent  by  Matlab  is  also 
shown  only for  the  smaller matrix  size, since  for  the larger  one 
Matlab  simulations  run out  of  memory.  As  it  was pointed  out  in 
\cite{Acebron_SISC} this  is mainly  due to the  memory demands  of any 
Krylov-based algorithm.  Instead, the  Monte Carlo method  is extremely 
efficient  in terms  of memory  management, since  it requires  only to 
allocate in memory the input matrix.                                    

\begin{table*}[htbp]   
\small
	\caption{Elapsed time spent for computing the solution of the 3D heat equation at the single point $(0,0,0)$, and for time $t=1$ as a function of the number of cores. 
	The initial value function was $f({\bf x})= e^{-(x^2+y^2+z^2)}$.  
The accuracy  was kept fixed  to $5\times  10^{-4}$. The length  of the 
domain  was $\delta=4$  and  the  number of  grid  points was $n_x^3$,  with 
$n_x=256$.}                                                             

	\begin{center}
		{\tt
			\begin{tabular}{cccc}\hline
				{\bf Cores}
				&{\bf Time MC (s)}&{\bf Time MLMC (s)}&{\bf Time Matlab (s)}\\\hline
				$1$ & $231$ & $162$ & $428$ \\
				$4$ &  $64$ &  $45$ & $306$ \\
				$8$ &  $33$ &  $23$ & $327$ \\
				$12$&  $22$ &  $16$ & $334$ \\\hline
			\end{tabular}
		}
	\end{center}
	\label{Table4a}
\end{table*}

\begin{table*}[htbp]   
\small
	\caption{Elapsed time spent for computing the solution of the 3D heat equation at the single point $(0,0,0)$, and for time $t=1$ as a function of the number of cores. 
The accuracy  was kept fixed  to $5\times  10^{-4}$. The length  of the 
domain  was $\delta=4$  and  the  number of  grid  points was $n_x^3$,  with 
$n_x=512$.}                                                             
	\begin{center}
		{\tt
			\begin{tabular}{ccc}\hline
				{\bf Cores}
				&{\bf Time MC (s)}&{\bf Time MLMC (s)}\\\hline
				$1$ & $1245$ & $893$\\
				$4$ & $ 364$ & $247$\\
				$8$ & $ 184$ & $128$\\
				$12$& $ 122$ & $ 87$\\\hline
			\end{tabular}
		}
	\end{center}
	\label{Table4b}
\end{table*}

For the solution of the convection-diffusion equation, the arbitrary complex geometry plotted in Fig. \ref{geom} was used as the domain, being the Dirichlet boundary 
data chosen to be $u=0$ at the surface of the outer sphere, and $u=1$ at the surface of the inner cylinder. The size of the domain can be conveniently increased by 
simply rescaling both, the sphere and cylinder, using a single scale parameter {\it scale}.  To generate the computational mesh, and obtaining the corresponding FEM 
matrices and vector, the scientific software {\it COMSOL} \cite{COMSOL} was used, choosing specifically linear elements at the discretization setting. Concerning the 
element size used when meshing the geometry, it was kept fixed to be $0.8$ and $0.14$ for the maximum and minimum size, respectively.

\begin{figure}[!t]
\centering
\includegraphics[width=4.5in,angle=-90]{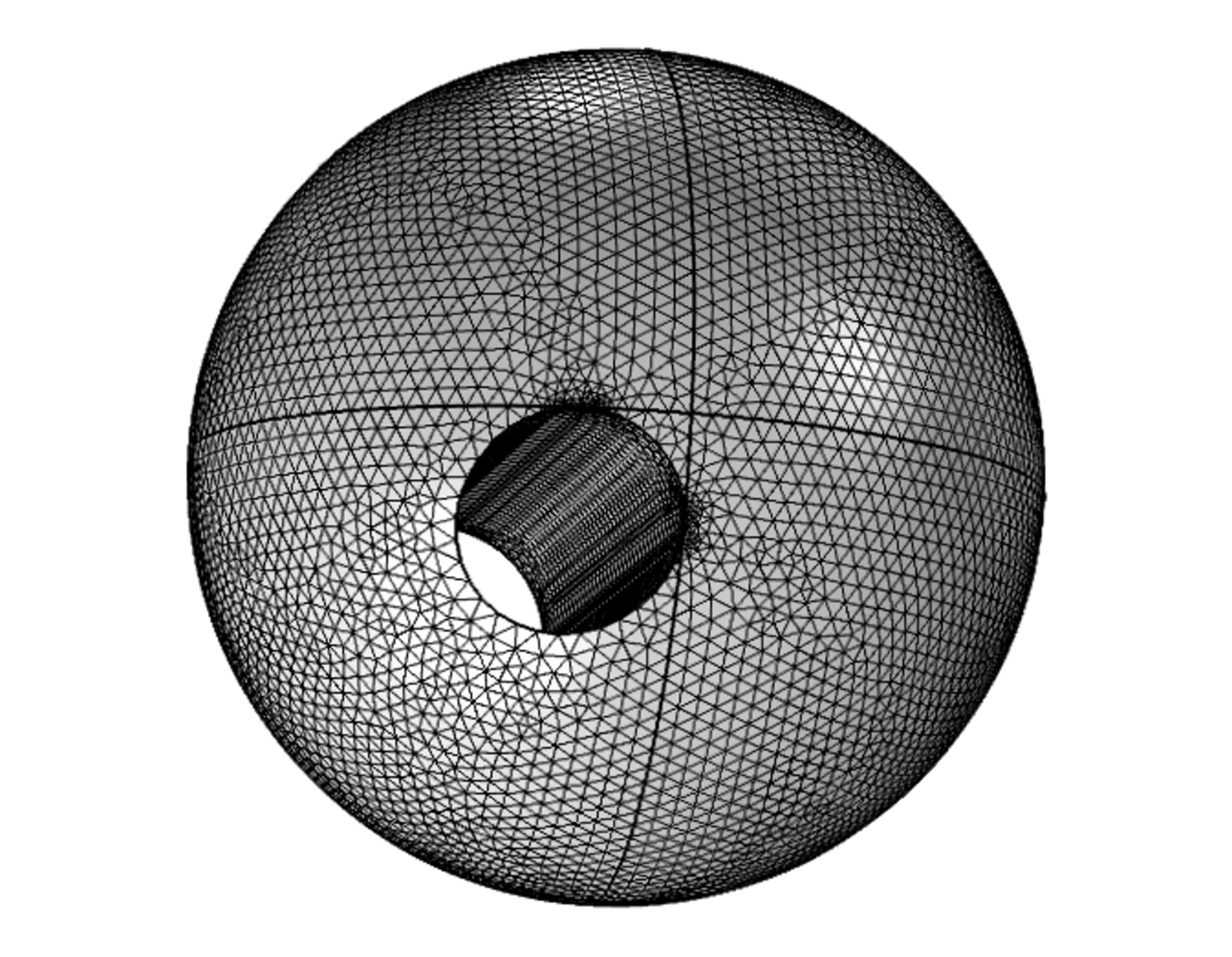}
\caption{Computational mesh describing the domain used for solving the 3D convection-diffusion equation.}
\label{geom}
\end{figure}

The computational time spent for computing the solution at a single point and time inside the domain for different number cores is shown in Table \ref{TableFEM1}. Note that 
both the MC method
and the MLMC method scale well with the number of cores, while the computational time spent with Matlab rapidly saturates when increasing the number of cores, due to the 
heavier intercommunication overhead of the Krylov-based algorithm.

\begin{table*}[htbp]   
\small
	\caption{Elapsed time spent for computing the solution of the 3D convection-diffusion equation at a single point, and for time $t=1$ as a function of the number 
	of cores. 
	The spatial point where the solution is computed,  has been chosen to be the nodal point of the computational mesh closer to the physical point $(0,0,0)$. The 
	accuracy  was kept fixed  
	to $5\times  10^{-4}$. The radius of the sphere was $r=4\times scale$, with $scale=12$, being the total number of nodes of the computational mesh $n=2,375,211$.}                                                             

	\begin{center}
		{\tt
			\begin{tabular}{cccc}\hline
				{\bf Cores}
				&{\bf Time MC (s)}&{\bf Time MLMC (s)}&{\bf Time Matlab (s)}\\\hline
				$1$ & $1,539$ & $520$ & $152$ \\
				$4$ &  $384$ &  $129$ & $107$ \\
				$8$ &  $214$ &  $83$ & $91$ \\
				$12$&  $170$ &  $61$ & $89$ \\\hline
			\end{tabular}
		}
	\end{center}
	\label{TableFEM1}
\end{table*}

In Table \ref{TableFEM2} the computational time spent when computing
the solution for different size domains is shown, being now the number of cores kept fixed to the maximum number of cores available. 

It is remarkable that the computational cost of the MC and MLMC method appears to be almost independent of the size of the domain, while it increases almost 
linearly for the Krylov-based method.
As it was already explained in \cite{Acebron_SISC} for the specific case of complex networks, this is mainly due to the similar matrix structure observed for any value of the matrix size. Because of this, the error becomes mostly independent of the size, and consequently it is not required to modify further the value of the sample size M, or the time step $\Delta t$ for increasingly larger matrix sizes (assuming a given prescribed accuracy for the solution), making therefore the computational cost of the algorithm almost independent of the size of the domain. This does not happen with the Krylov-based method, allowing specifically for the MLMC method to achieve a computational performance higher than the Matlab solution for large scale problems.

\begin{table*}[htbp]   
\small
	\caption{Elapsed time spent for computing the solution of the 3D convection-diffusion equation at a single spatial point, and time $t=1$ for different sizes of the 
	domain. This has been done rescaling both, the sphere and cylinder, choosing different values of the scale parameter.	The number of cores was kept fixed to 
	$12$ cores. The spatial point where the solution was computed consisted in the nodal points closer to the physical point $(0,0,0)$. The accuracy  was kept fixed  to $5\times  10^{-4}$. }                                                             

	\begin{center}
		{\tt
			\begin{tabular}{ccccc}\hline
				{\bf Scale}&{\bf n}
				&{\bf Time MC (s)}&{\bf Time MLMC (s)}&{\bf Time Matlab (s)}\\\hline
				$4$ & $83,813$ & $129$ & $55$ & $2$ \\
				$8$ & $69,4751$&  $167$ &  $59$ & $24$ \\
				$12$ & $2,375,211$  &  $170$ &  $61$ & $90$ \\\hline
			\end{tabular}
		}
	\end{center}
	\label{TableFEM2}
\end{table*}

Even though, the MC and MLMC methods were proposed initially to compute the solution at single temporal points, it turns out that they can 
be used as well to obtain the solution at intermediate instants of time. As a remarkable feature this can be done without any additional computational cost, as it was 
already explained in Sec. \ref{simulations}. For the Krylov-based methods, it is worth pointing out that there were also some recent attempts \cite{Mohy} to improve the performance of the method for computing the solution in a finite time interval, being however the performance of the resulting algorithm slightly worse than the performance of the algorithm for computing the solution at a single time. To test the accuracy of the obtained solution for intermediate times,  in Fig. \ref{fig_FEM_time} the solution computed using the Monte Carlo method is compared with the solution obtained using the Krylov-based method. Note the excellent agreement between both solutions for any value of time. 

\begin{figure}[!t]
\includegraphics[width=4.5in,angle=-90]{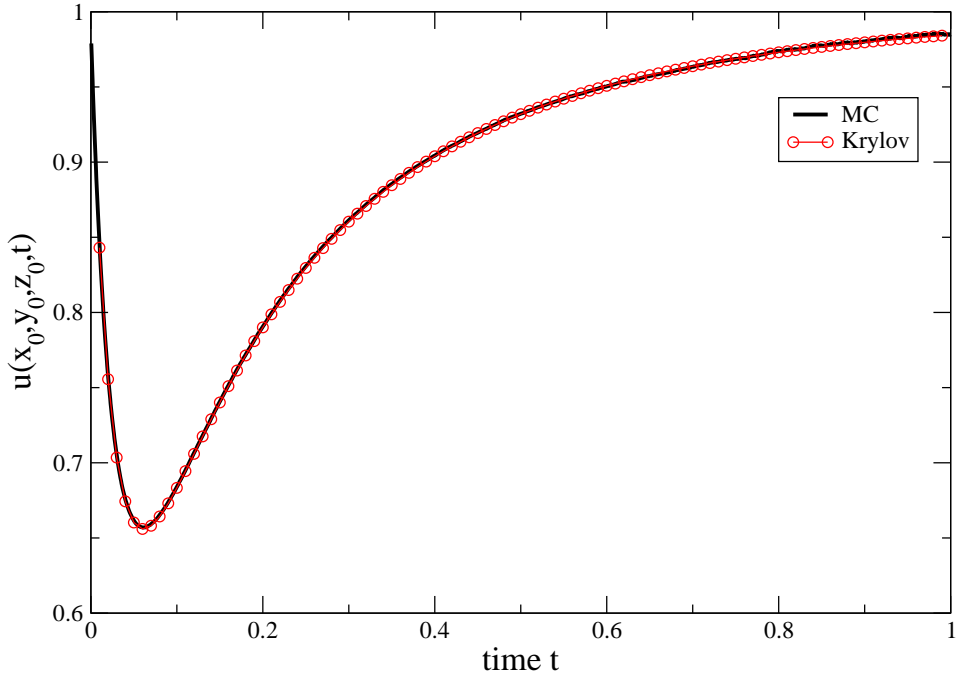}
\caption{Solution of the 3D convection-diffusion equation evaluated at the nodal points closer to the physical point $(0,0,0)$, and for different values of time. The initial value function was $f({\bf x})= e^{-(x^2+y^2+z^2)}$, and the velocity field $\beta$ $(-1,-1,-1)$. The solid line denotes the solution obtained with the MLMC method, and the dotted line corresponds to the Krylov-based solution.}
\label{fig_FEM_time}
\end{figure}


\noindent {\bf Example B: Complex networks}

{\it Small-world networks}. In Table \ref{Table1a} the computational time required to compute the total communicability of a small-world network of size $n=10^8$  is shown as a function 
of the number of cores for the Monte Carlo, MLMC method, and Matlab.

\begin{table*}[htb]   
\small
	\caption{Elapsed time spent for computing the total communicability of a small-world network as a function of the number of cores. The size of the matrix is $n=10^8$,  and the 
	accuracy $\varepsilon$ was kept fixed to $6.25 \times 10^{-4}$. The simulations were run on the commodity server.}
	\begin{center}
		{\tt
			\begin{tabular}{cccc}\hline
				{\bf Cores} &{\bf Time MC (s)}&{\bf Time MLMC (s)}&{\bf Time Matlab (s)}\\\hline
				$1 $ & $1027$ & $579$ & $348 $ \\
				$4 $ & $262 $ & $156$ & $258 $ \\
				$8 $ & $130 $ & $78$ & $257 $ \\
				$12$ & $86  $ & $51$ & $282 $ \\\hline
			\end{tabular}
		}
	\end{center}
	\label{Table1a}
\end{table*}

In Table \ref{Table1b} the results corresponding to a sort of weak scalability analysis of the MLMC algorithm are shown. For this purpose
the algorithm was run for an increasing number of cores, searching for the value of the accuracy $\varepsilon$ that equals the simulation time. Note that when the number of used cores increases, the accuracy $\varepsilon$ should be reduced accordingly. 
Moreover, since the computational cost of the MLMC algorithm is of order $\mathcal{O}(\varepsilon^{-2})$, the workload of the algorithm increases when reducing the value of $\varepsilon$, being therefore required to increase conveniently the number of used cores to keep approximately constant the overall execution time.  

From the results in Table  \ref{Table1b} it can be seen, for instance, that whenever the number of used cores increases $24$ times (passing from $1$ to $24$ cores), the value of $\varepsilon$ should be reduced by a factor of approximately $0.22$ for the same execution time. It is worth observing that for such a reduction of $\varepsilon$, the workload of the algorithm increases by a factor of $21$, which can be fully mitigated by increasing the number of used cores up to $24$. This is due to the remarkable scalability of the parallel algorithm. Similar conclusions can be drawn from the results obtained when using $48$ cores.

\begin{table*}[hbp]   
\small
	\caption{Weak scalability analysis of the MLMC algorithm for computing the total communicability of a small-world network. 
	The elapsed execution time was kept fixed around $1100$ seconds, and the size of the matrix was $n=10^8$. The simulations were run on the MareNostrum supercomputer.}
	\begin{center}
		{\tt
			\begin{tabular}{ccc}\hline
				{\bf Cores} &{\bf $\varepsilon$}& {\bf Time MLMC (s)}\\\hline
				$1 $ & $4.05\times10^{-4} $ & $1107$\\
				$24 $ & $8.8\times10^{-5} $ & $1095$\\
				$48 $ & $ 6.25\times10^{-5} $ & $1115$\\\hline
			\end{tabular}
		}
	\end{center}
	\label{Table1b}
\end{table*}

\begin{table*}[htbp]   
\small
\caption{Elapsed time spent for  computing the total communicability of 
a small-world network  as a function of the network  size. The accuracy 
$\varepsilon$ was kept fixed to $6.25\times 10^{-4}$, and the number of 
cores on the commodity server to $12$.}                                 

	\begin{center}
		{\tt
			\begin{tabular}{cccc}\hline
				{\bf Size n}
				&{\bf Time MC (s)}&{\bf Time MLMC (s)}&{\bf Time Matlab (s)}\\\hline
				$10^5 $ & $40$ & $30$ & $0.2 $ \\
				$10^6 $ & $80$ & $33$ & $1.9 $ \\
				$10^7 $ & $85$ & $48$ & $21 $ \\
				$10^8 $ & $86$ & $51$ & $282 $ \\\hline
			\end{tabular}
		}
	\end{center}
	\label{Table2}
\end{table*}

Table \ref{Table2} shows the results corresponding to the computational time when computing
the total communicability for different network sizes. Here the number of cores was fixed to the maximum number of cores available. 
It is remarkable to note that the computational cost of the MLMC method appears to be almost independent of the size of the network, while it increases almost linearly for the Krylov-based method.

{\it Scale-free networks}. In Table \ref{Table5} the results corresponding to a scale-free network for an arbitrarily large size are shown for different number of cores. Similar to the results obtained for the small-world network, the MLMC method outperforms
the Krylov-based method for large size networks and cores.

\begin{table*}[htbp]   
\small
	\caption{Elapsed time spent for computing the total communicability of a scale-free network as a function of the number of cores. The size of the matrix was $n=10^8$,  and the 
	accuracy $\varepsilon$ was kept fixed to $2.5 \times 10^{-8}$. The simulations were run on the commodity server. }
	\begin{center}
		{\tt
			\begin{tabular}{cccc}\hline
				{\bf Cores} &{\bf Time MC (s)}&{\bf Time MLMC (s)}&{\bf Time Matlab (s)}\\\hline
			
			$1 $ & $136$ & $79$ & $98 $ \\
                $4 $ & $ 35$ & $18$ & $88 $ \\
                $8 $ & $ 18$ & $ 9$ & $89 $ \\
                $12 $& $ 12$ & $ 6$ & $95 $ \\\hline 
			\end{tabular}
		}
	\end{center}
	\label{Table5}
\end{table*}

%

\subsection{Distributed memory architecture}

The simulations for a distributed memory architecture were carried out
on the MareNostrum Supercomputer of the Barcelona Supercomputing
Center (BSC) and on the Marconi Supercomputer at CINECA. In both
cases, two processes were launched on each node (one per processor),
with as many threads as physical cores available (24 threads on
MareNostrum and 18 on Marconi). Up to 200 nodes (a total of 9600
cores) were used on MareNostrum and up to 160 nodes (5760 cores) on
Marconi, which are respectively the maximum we had access to.

To the best of our knowledge, no parallel code suitable for distributed 
memory  architecture  capable  of  computing the  action  of  a  matrix 
exponential over  a vector  is currently  available. Therefore,  in the 
following, only results corresponding to the proposed multilevel method 
implemented in MPI are given.

\noindent {\bf Example B: Partial Differential equations}.

The computational time spent by the multilevel
method for computing the solution of the boundary value problem for
the 3D heat equations at a single point is shown in Table \ref{Table7}
for different number of cores. These results were all obtained in the
Marconi system. The speedup column indicates how much
faster the execution is relative to half the number of cores (previous
row in the table).

\begin{table*}[htbp]   
\small
	\caption{Elapsed time spent for computing the solution of the 3D
          heat equation at the single point $(0,0,0)$, and for 
          time $t=1$ as a function of the number of cores.  The accuracy
          $\varepsilon$ was kept fixed at $10^{-5}$. The length of the domain was $\delta=4$,
          and three different numbers of discretization points, $n_x$,
          were used. Note that the matrix size for the system is given
          by $n_x^3\times n_x^3$.}
	\begin{center}
		{
			\begin{tabular}{cccc}\hline
		{\bf $n_x$} & \bf Cores & \bf Time MLMC (s) & \bf Speedup\\\hline
				&  $720$ & $153$ &  \\
128				& $1440$ & $82$ & $1.9$ \\
				& $2880$ & $41$ & $2.0$\\
				& $5760$ & $21$ & $2.0$\\\hline
				&  $720$ & $774$ &  \\
256				& $1440$ & $395$ & $2.0$ \\
				& $2880$ & $196$ & $2.0$\\
				& $5760$ & $107$ & $1.8$\\\hline
				&  $720$ & $3577$ &  \\
512				& $1440$ & $1773$ & $2.0$ \\
				& $2880$ & $906$ & $2.0$\\
				& $5760$ & $467$ & $1.9$\\\hline
			\end{tabular}
		}
	\end{center}
	\label{Table7}
\end{table*}

In all cases, the speedup is very close to the ideal, even for such a
large number of cores. This is because most of the
calculations are totally independent, corresponding to the Monte Carlo
simulations performed at the each level of the method. For the defined
level of accuracy $\varepsilon$, a very large number of samples is required, exceeding the number
of $10^9$ for the coarsest level. Communication is required
between levels, but the overhead is negligible.

\noindent {\bf Example B: Complex networks} 

{\it Small-world networks}. In Table \ref{Table6} the computational
time required to compute the total communicability of a small-world
network of size $n=10^8$ is shown as a function of different number of
cores for the multilevel method.

\begin{table*}[htbp]   
\small
	\caption{Elapsed time spent for computing the total communicability of a small-world network as a function of the number cores. The size of the matrix was $n=10^8$,  and the 
	accuracy $\varepsilon$ was kept fixed at $10^{-7}$. }
	\begin{center}
		{
			\begin{tabular}{cccc}\hline
			  & \bf Cores & \bf Time MLMC (s) & \bf Speedup\\\hline
				& $1200$ & $315$ & \\
MareNostrum			& $2400$ & $175$ & $1.8$ \\
				& $4800$ &  $87$ & $2.0$ \\ 
				& $9600$ &  $50$ & $1.7$ \\ 
\hline
				&  $720$ & $320$ & \\
Marconi		                & $1440$ & $166$ & $1.9$\\
				& $2880$ &  $86$ & $1.9$\\ 
				& $5760$ &  $44$ & $2.0$\\ 
\hline
			\end{tabular}
		}
	\end{center}
	\label{Table6}
\end{table*}

As in the case of the partial differential equation, the scalability of the method is
almost perfect.

\section{Conclusion}\label{conclusions}

The multilevel Monte Carlo method was conveniently recast to be able to compute the action of a matrix exponential over a 
vector. As the main ingredient of the method, the leading probabilistic method requires generating suitable random paths which evolve through the indices of the matrix 
according to the probability law of a continuous-time Markov chain governed by the associated Laplacian matrix. 

This new method extends the previous work in three respects. First, the probabilistic method proposed in \cite{Acebron_SISC} has been generalized allowing now to be 
applied to any class of matrices (not only adjacency matrices). Second, it allows now to compute much more efficiently a highly accurate solution. In fact the computational 
complexity has been
proved in this paper to be significantly better than that of the classical Monte Carlo method. Third, the underlying algorithm after parallelization has been shown to be highly scalable, which in practice enables simulation of large-scale problems
for extremely large number of cores. We analyzed the performance of the algorithm running several benchmarks of interest in science and engineering. These consist in computing the total communicability of the network for a variety of complex networks (real and synthetic), and in 
solving at single points inside the domain a boundary-value problem for parabolic partial differential equations. Finally, whenever available, 
simulations based on a standard Krylov-based method have been conducted, and the performance compared with the multilevel MC method. In particular, the multilevel MC method clearly outperforms the deterministic method for solving 
problems consisting in large matrices, not only in terms of computational time, but also in terms of memory requirements. 

To conclude, an interesting question deserving further investigation is whether the proposed method can be extended to deal with other matrix functions such as trigonometric functions arising in oscillatory problems, and even hyperbolic functions appearing in coupled hyperbolic systems of partial differential equations.

\section*{Acknowledgments}

\begin{sloppy}

The   work   has  been   performed   under   the  Project   HPC-EUROPA3 
(INFRAIA-2016-1-730897), with the support of the EC Research Innovation 
Action under the H2020 Programme; in particular, the authors gratefully 
acknowledge  the support  of  the Computer  Architecture Department  at 
Universitat Polit\`ecnica de Catalunya (UPC) and the computer resources 
and  technical  support  provided by  Barcelona  Supercomputing  Center 
(BSC).  We acknowledge  PRACE  for  awarding us  access  to Marconi  at 
CINECA,  through grant  2010PA4246.  This work  was  also supported  by 
Funda\c{c}\~{a}o  para a  Ci\^{e}ncia e  a Tecnologia  under Grant  No. 
UID/CEC/50021/2019, by  the Spanish Ministry of  Science and Technology 
through  TIN2015-65316-P project  and by  the Generalitat  de Catalunya 
(contract 2017-SGR-1414).                                               

\end{sloppy}

\bibliographystyle{abbrv}

\end{document}